\newtheorem{thm}{Theorem}[section]
\newtheorem{lem}[thm]{Lemma}
\theoremstyle{definition}
\theoremstyle{remark}
\newtheorem{rem}[thm]{Remark}
\numberwithin{equation}{section}
\begin{document}

\title[Approximate Acoustic Cloaking]{On near-cloak in acoustic scattering}

\author{Hongyu Liu}
\address{Department of
Mathematics and Statistics, University of North Carolina, Charlotte,
NC 28223, USA.}
\email{hongyu.liuip@gmail.com}

\thanks{\emph{2010 Mathematics Subject Classification.} 35Q60, 35J05, 31B10, 35R30, 78A40}

\keywords{Inverse acoustic scattering, invisibility cloaking, transformation optics, lossy layer, regularization, asymptotic estimates}

\date{}

\maketitle

\begin{abstract}
Invisibility cloaking in acoustic scattering via the approach of transformation optics is considered. The near-cloaks of both passive medium and active/radiating object are investigated. From a practical viewpoint, we are especially interested in the cloaking of an arbitrary (but regular) content. It is shown that one cannot achieve the near-cloak unless some special mechanism is introduced into the construction. A general lossy layer is incorporated into the construction of our near-cloaking devices. We derive very accurate estimates of the scattering amplitude in terms of the regularization parameter and the material parameters of the lossy layer in different settings.  
\end{abstract}

\section{Introduction}

We shall be concerned with the invisibility cloaking in acoustic scattering. Let $D$ be a bounded convex $C^2$ domain in $\mathbb{R}^N$, $N\geq 2$. We assume that $D$ contains the origin and let
\[
D_\rho=\{\rho x; x\in D\},\quad \rho\in\mathbb{R}_+.
\]
Let $\Omega$ be a bounded domain in $\mathbb{R}^N$ such that $\Omega^c:=\mathbb{R}^N\backslash\overline{\Omega}$ is connected and $D\subset\hspace*{-0.3mm}\subset\Omega$. Consider the following scattering problem due to an inhomogeneity supported in $\Omega$ and a radiating source $f$,
\begin{equation}\label{eq:Helm1}
\begin{cases}
& \displaystyle{\sum_{i,j=1}^N \frac{\partial}{\partial x_i}\left(\sigma^{ij}(x)\frac{\partial u(x)}{\partial x_j} \right)+\omega^2 q(x) u(x)=f(x)},\quad x\in \mathbb{R}^N,\\
& \displaystyle{u(x)=u^s(x)+u^{i}(x;\omega),\hspace*{1cm} x\in \Omega^c,}\\
& \displaystyle{\lim_{|x|\rightarrow\infty}|x|^{(N-1)/2}\left\{\frac{\partial u^s(x)}{\partial |x|}-i\omega u^s(x)\right\}=0},
\end{cases}
\end{equation}
where
%
\begin{equation}\label{eq:medium}
\sigma(x), q(x)=\begin{cases}
I, 1\quad & \mbox{in\ \ $\Omega^c$},\\
\sigma_c, q_c\quad & \mbox{in\ \ $\Omega\backslash D$},\\
\sigma_l, q_l\quad & \mbox{in $D\backslash D_{1/2}$},\\
\sigma_a, q_a\quad & \mbox{in\ \ $D_{1/2}$},
\end{cases}
\end{equation}
and $supp(f)\subset D_{1/2}$ with $f\in L^2(D_{1/2})$. In \eqref{eq:Helm1}, $u^i(x;\omega)$ is an entire solution to $\Delta v+\omega^2 v=0$ in the whole space. In the physical situation, (\ref{eq:Helm1}) describes the scattering of an inhomogeneous medium supported in $\Omega$ and a radiating source $f$ supported in $D_{1/2}$ due to a time-harmonic wave $u^{i}(x;\omega)$ oscillating with frequency $\omega\in\mathbb{R}_+$. In the classical scattering theory, we note that $u^i$ is usually taken to be the plane wave $e^{i\omega x\cdot d}$ with $d\in\mathbb{S}^{N-1}$. $\sigma$ and $q$ are the acoustical material parameters with $\sigma^{-1}$ denoting the density tensor and $q$ the modulus. We assume that $q\in L^\infty(\mathbb{R}^N)$ with $\Im q\geq 0$ and, $\sigma=(\sigma^{ij})_{i,j=1}^N$ is a symmetric matrix and uniformly elliptic in the sense that
\[
c|\xi|^2\leq\sum_{i,j=1}^N \sigma(x)^{ij}\xi_i\xi_j\leq C|\xi|^2, \quad \forall \xi\in\mathbb{R}^N,\ \ \forall x\in\mathbb{R}^N,
\]
where $c$ and $C$ are positive constants. In our subsequent study, an acoustic medium is referred to as {\it regular} if its modulus parameter $q$ is essentially bounded with $\Im q\geq 0$ and its density tensor is symmetric and uniformly elliptic. It is known that (\ref{eq:Helm1}) has a unique solution $u\in H^1_{loc}(\mathbb{R}^N)$ admitting the following asymptotic development as $|x|\rightarrow +\infty$(cf. \cite{ColKre,Isa,Mcl,Ned} )
\begin{equation}\label{eq:farfield}
u(x)=u^i+\frac{e^{i\omega |x|}}{|x|^{(N-1)/2}}A(\hat{x})+\mathcal{O}\left(\frac{1}{|x|^{(N+1)/2}}\right),\quad |x|\rightarrow \infty,
\end{equation}
where $\hat{x}=x/|x|\in\mathbb{S}^{N-1}$ for $x\in\mathbb{R}^N$. $A(\hat{x})$ is known as the {\it scattering amplitude} or the {\it far-field pattern}, which encodes the exterior wave patterns produced by the underlying scattering object. The classical inverse scattering problem of significant practical importance is to recover the inhomogeneity $\sigma, q$ and/or the radiating source $f$ from the measurement of the corresponding scattering amplitude $A(\hat{x})$. In this work, we are mainly interested in the setting that $\sigma_a, q_a$ and $f$ supported in $D_{1/2}$ are the target objects and, $(\sigma_l, q_l)$ in $D\backslash D_{1/2}$ and $(\sigma_c, q_c)$ in $\Omega\backslash D$ are some designed cloaking medium which could hide the target object from exterior wave detections. Our construction is based on the so-called {\it transformation optics} (cf. \cite{GLU},\cite{GLU2},\cite{Leo},\cite{PenSchSmi}). Throughout we shall suppose that there exists a (uniformly) bi-Lipschitz and orientation-preserving map,
\begin{equation}\label{eq:transformation1}
F_\varepsilon^{(1)}:\ \ \overline{\Omega}\backslash D_\varepsilon\rightarrow \overline{\Omega}\backslash D,\quad F_\varepsilon^{(1)}|_{\partial\Omega}=\mbox{Identity},
\end{equation}
where $\varepsilon\in\mathbb{R}_+$ and $0<\varepsilon<1$. Let
\begin{equation}\label{eq:transformation2}
F_\varepsilon^{(2)}(x)=\frac{x}{\varepsilon},\quad x\in D_\varepsilon,
\end{equation}
and
\begin{equation}\label{eq:transformation}
F_\varepsilon=\begin{cases}
\mbox{Identity}\quad & \mbox{on\ \ $\Omega^c$},\\
F_\varepsilon^{(1)}\quad & \mbox{on\ \ $\overline{\Omega}\backslash D_\varepsilon$},\\
F_\varepsilon^{(2)}\quad & \mbox{on\ \ $D_\varepsilon$}. 
\end{cases}
\end{equation}
Set
\begin{equation}\label{eq:pushc}
\sigma_c=(F_\varepsilon^{(1)})_*\sigma_0,\quad q_c=(F_\varepsilon^{(1)})_*q_0,
\end{equation}
where $(\sigma_0, q_0)=(I, 1)$ and the push-forwards are defined by
\begin{equation}\label{eq:pushforward}
\begin{split}
&(F_\varepsilon^{(1)})_*\sigma_0(y)=\frac{DF_\varepsilon^{(1)}(x)\cdot \sigma_0(x)\cdot (DF_\varepsilon^{(1)}(x))^T}{\left|\det(DF_\varepsilon^{(1)}(x))\right|}\bigg|_{x=(F_\varepsilon^{(1)})^{-1}(y)},\\
& (F_\varepsilon^{(1)})_*q_0(y)=\frac{q_0(x)}{\left|\det(DF_\varepsilon^{(1)}(x))\right|}\bigg|_{x=(F_\varepsilon^{(1)})^{-1}(y)}, \quad x\in\Omega\backslash D_\varepsilon,\ \  y\in\Omega\backslash D,
\end{split}
\end{equation}
where $DF_{\varepsilon}^{(1)}$ denotes the Jacobian matrix of $F_\varepsilon^{(1)}$.
In a similar manner, we set
\begin{equation}\label{eq:lossy push}
\sigma_l=(F_\varepsilon^{(2)})_*\widetilde{\sigma}_l
\end{equation}
with
\begin{equation}\label{eq:lossy virtual}
\widetilde{\sigma}_l(x)=\varepsilon^r(\gamma(x/\varepsilon)Pr(n(x'))+g(x/\varepsilon)(I-Pr(n(x')))),\quad x\in D_\varepsilon\backslash D_{\varepsilon/2}
\end{equation}
where $r>2-N/2$ and $\gamma(x)\in C^2(\overline{D}\backslash D_{1/2})$ is a positive function that is bounded below, $g(x)=(g^{ij}(x))_{i,j=1}^N\in C^2(\overline{D}\backslash D_{1/2})^{N\times N}$ is symmetric and uniformly elliptic, and $x'\in\partial D$ lying on the line passing through the origin and $x\in D_\varepsilon\backslash D_{\varepsilon/2}$, and $n(x')\in\mathbb{S}^{N-1}$ is the exterior unit normal vector to $\partial D$ at $x'$ and $Pr(n(x'))$ denotes the projection along the $n(x')$-direction. Moreover, we let
\begin{equation}\label{eq:lossy modulus}
q_l=(F_\varepsilon^{(2)})_*\widetilde{q}_l\quad \mbox{with}\quad \widetilde{q}_l(x)=\alpha(x/\varepsilon)+i\beta(x/\varepsilon),\quad x\in D_\varepsilon\backslash D_{\varepsilon/2}
\end{equation}
where $\alpha(x)$ and $\beta(x)$ for $x\in D\backslash D_{1/2}$ are positive functions that are bounded below and above. We shall show that the above construction will produce a practical near-cloaking device; that is, the corresponding scattering amplitude will be asymptotically small in terms of the accurate quantitative estimates derived in the subsequent section.

In recent years, the transformation optics and invisibility cloaking have received significant attentions;  see, e.g., \cite{CC,GKLU4,GKLU5,Nor} and references therein. The ideal cloaking requires singular cloaking material and in order to avoid the singular structure, various near-cloaking schemes have been developed in literature. Some earlier results on approximate cloaking were presented in \cite{GKLUoe,GKLU_2,RYNQ}, where truncation of singularities are considered for spherical cloaking devices with uniform cloaked contents. In \cite{KOVW,KSVW}, the `blow-up-a-small-region' construction were proposed, and the authors show that in order to cloak an arbitrary target medium, one has to employ a lossy layer right between the cloaking region and the cloaked region, otherwise there is cloak-busting inclusion which defies the attempt to cloak. In \cite{KSVW}, the construction is similar to the one in (\ref{eq:medium}), but that the material parameters of the lossy layer are given by
\begin{equation}\label{eq:lossy1}
\sigma_l=\varepsilon^{N-2}I,\quad q_l=\varepsilon^N(1+ic_0\varepsilon^{-2})\quad \mbox{in\ \ $D\backslash D_{1/2}$},
\end{equation}
where $c_0$ is a positive constant. In \cite{LiLiuSun,LiuSun} a different lossy layer is proposed by employing a high density medium instead of high loss, 
\begin{equation}\label{eq:lossy2}
\sigma_l=\varepsilon^{N+\delta} I,\quad q_l=\varepsilon^N(a_0+i b_0)\quad \mbox{in\ \ $D\backslash D_{1/2}$},
\end{equation}
where $a_0$, $b_0$ and $\delta$ are positive constants. Both the cloaking schemes in \cite{KSVW} and \cite{LiuSun} are assessed in terms of exterior boundary measurements encoded into the boundary Neumann-to-Dirichlet map, and the one in \cite{LiuSun} is shown to produce enhanced performance than existing ones. Moreover, both the schemes in \cite{KSVW} and \cite{LiuSun} are assessed for the cloaking of passive media only.  More approximate cloaking schemes of different nature could be found in 
\cite{Ammari1,Ammari2,Ammari3,Liu}. By straightforward calculations, one can show that material parameters in (\ref{eq:lossy push}) and (\ref{eq:lossy modulus}) for our present study are given by
\begin{equation}\label{eq:lossy effective 1}
{\sigma}_l(x)=\varepsilon^{N+r-2}(\gamma(x)Pr(n(x'))+g(x)(I-Pr(n(x')))),\quad x\in D\backslash D_{1/2}
\end{equation}
and
\begin{equation}\label{eq:lossy effective 2}
q_l(x)=\varepsilon^N(\alpha(x)+i\beta(x)),\quad x\in D\backslash D_{1/2}.
\end{equation}
Compared to the lossy layer \eqref{eq:lossy1} and \eqref{eq:lossy2}, our present construction \eqref{eq:lossy effective 1}--\eqref{eq:lossy effective 2} clearly has more flexibility. It allows the lossy layer to be variable or even anisotropic, and this would be of practical importance when fabrication fluctuation happens. Moreover, comparing \eqref{eq:lossy2} and \eqref{eq:lossy effective 2}, we see that the density in \eqref{eq:lossy effective 1} could be less high than that of \eqref{eq:lossy2}, but we will show that one could still achieve favorable near-cloaks. By comparing \eqref{eq:lossy1} and \eqref{eq:lossy effective 2}, especially by taking $N=2$, we see that in \eqref{eq:lossy1} one has to implement a high lossy parameter, whereas in \eqref{eq:lossy effective 1}--\eqref{eq:lossy effective 2} one could make use of a layer with low loss but with a reasonably high density. For the construction \eqref{eq:medium} with \eqref{eq:pushc}, \eqref{eq:lossy effective 1} and \eqref{eq:lossy effective 2}, we show that one could achieve the near-cloak disregarding the passive content $(\sigma_a, q_a)$, i.e. it could be arbitrary. This is of critical importance from a practical viewpoint. In addition to the cloaking of a passive medium, we also consider the cloaking of an active/radiating source for our construction. By the example given in the next section, one can see that if the passive content is allowed to be arbitrary (but regular), one cannot cloak a generic source without a lossy layer. However,  if the cloaked region is maintained to be absorbing, we show that one can achieve a much practical and favorable near-cloak of a source term by our scheme.

The rest of the paper is organized as follows. In Section 2, we present the main theorems in assessing the near-cloaking performances of our construction in different settings. Section 3 is devoted to the conclusion and discussion.
\section{Main results}

We first show that in order to assess the near-cloaking performance of our construction, the study could be reduced to estimating the scattering amplitude due to a small inclusion with arbitrary contents enclosed by a thin lossy layer. We shall make use of the following theorem collecting the key ingredient of transformation optics.

\begin{thm}\label{thm:transoptics}
Let $\Omega$ and $\widetilde{\Omega}$ be two bounded Lipschitz domains in $\mathbb{R}^N$ and suppose that there exists a bi-Lipschitz and orientation-preserving mapping $F:\Omega\rightarrow \widetilde{\Omega}$. Let $u\in H^1(\Omega)$ satisfy 
\[
\nabla\cdot(\sigma(x)\nabla u(x))+\omega^2 q(x) u(x)=f(x),\quad x\in \Omega,
\]
where $\sigma(x), q(x)$, $x\in\Omega$ are uniformly elliptic and $f\in L^2(\Omega)$. Then one has that $\widetilde{u}=(F^{-1})^* u:=u\circ F^{-1}\in H^1(\widetilde{\Omega})$ satisfies
\[
\nabla\cdot(\widetilde{\sigma}(y)\nabla \widetilde{u}(y))+\omega^2 \widetilde{q}(y) \widetilde{u}(y)=\widetilde{f}(y), \ y\in \widetilde{\Omega},
\]
where
\[
\widetilde{\sigma}=F_*\sigma, \ \ \widetilde{q}=F_*q,\ \ \widetilde{f}=\left(\frac{f}{\left|\det(DF)\right|}\right)\circ F^{-1}.
\]
\end{thm}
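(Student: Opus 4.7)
The plan is to establish the transformed equation through its weak formulation combined with a bi-Lipschitz change of variables. By Rademacher's theorem applied to $F$ and $F^{-1}$, both Jacobians $DF$ and $DF^{-1}$ exist almost everywhere and are essentially bounded, so the pullback by $F$ (resp.\ by $F^{-1}$) is a bounded linear isomorphism $H^1(\widetilde\Omega)\to H^1(\Omega)$ (resp.\ $H^1(\Omega)\to H^1(\widetilde\Omega)$) that restricts to an isomorphism between the corresponding $H^1_0$ spaces. In particular $\widetilde u=u\circ F^{-1}\in H^1(\widetilde\Omega)$, and the pushed-forward tensor $\widetilde\sigma$ inherits uniform ellipticity from $\sigma$. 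With these spaces identified, it suffices to check the weak identity
\begin{equation*}
\int_{\widetilde\Omega}\widetilde\sigma(y)\nabla\widetilde u(y)\cdot\nabla\widetilde\varphi(y)\,dy-\omega^2\int_{\widetilde\Omega}\widetilde q(y)\widetilde u(y)\widetilde\varphi(y)\,dy=-\int_{\widetilde\Omega}\widetilde f(y)\widetilde\varphi(y)\,dy
\end{equation*}
for every test function $\widetilde\varphi\in H^1_0(\widetilde\Omega)$, which is exactly the transformed PDE.

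To verify this, I would start from the weak form of the given equation on $\Omega$ and plug in the specific test function $\varphi:=\widetilde\varphi\circ F\in H^1_0(\Omega)$. The chain rule for Lipschitz compositions yields, for a.e.\ $x\in\Omega$,
\begin{equation*}
\nabla_x u(x)=DF(x)^{T}(\nabla_y\widetilde u)(F(x)),\qquad \nabla_x\varphi(x)=DF(x)^{T}(\nabla_y\widetilde\varphi)(F(x)),
\end{equation*}
so that
\begin{equation*}
\sigma(x)\nabla_x u(x)\cdot\nabla_x\varphi(x)=\bigl(DF(x)\sigma(x)DF(x)^{T}\bigr)(\nabla_y\widetilde u)(F(x))\cdot(\nabla_y\widetilde\varphi)(F(x)).
\end{equation*}
Applying the Lipschitz area formula with $y=F(x)$, $dx=dy/|\det DF(F^{-1}(y))|$, and invoking $\det DF>0$ a.e.\ (from orientation preservation), the three integrals in the original weak form transform precisely into those appearing above, with $\widetilde\sigma=F_*\sigma$, $\widetilde q=F_*q$, and $\widetilde f=(f/|\det DF|)\circ F^{-1}$ in the form stipulated in \eqref{eq:pushforward}. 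Since $\widetilde\varphi$ was arbitrary, this gives the claim.

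The main technical obstacle is justifying the chain rule and area formula at merely Lipschitz (as opposed to $C^1$) regularity; both are classical consequences of Rademacher's theorem together with the area formula for Lipschitz mappings, and they apply under the standing bi-Lipschitz hypothesis on $F$. Everything else, namely tracking how quadratic forms in $\nabla u$ transform, factoring $1/|\det DF|$ out of the change of variables, and recognising the resulting coefficients as the push-forwards defined in \eqref{eq:pushforward}, is a purely algebraic rearrangement and presents no additional difficulty.
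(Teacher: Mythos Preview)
Your argument is correct and is precisely the standard weak-formulation proof of this change-of-variables principle. The paper itself does not supply a proof of this theorem at all; it simply refers the reader to \cite{GKLU5,KOVW,Liu}, where the argument given is essentially the one you outline (test against $\varphi=\widetilde\varphi\circ F$, apply the chain rule for $H^1$ functions under bi-Lipschitz composition, and change variables via the area formula). So there is nothing to compare: you have filled in what the paper omits, and your treatment of the Lipschitz-regularity issues (Rademacher, area formula, preservation of $H^1$ and $H^1_0$ under bi-Lipschitz pullback) is the right level of care.
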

The proof of Theorem~\ref{thm:transoptics} could be found in \cite{GKLU5,KOVW,Liu}. By using Theorem~\ref{thm:transoptics}, we have by direct verification that
\begin{lem}\label{lem:reduce}
Let $u\in H_{loc}^1(\mathbb{R}^N)$ be the solution to (\ref{eq:Helm1}) with $(\sigma_c, q_c)$ given in (\ref{eq:pushc}) and $(\sigma_l, q_l)$ given in (\ref{eq:lossy push})--(\ref{eq:lossy modulus}), then $u_\varepsilon=F_\varepsilon^*u\in H_{loc}^1(\mathbb{R}^N)$ is the solution to
\begin{equation}\label{eq:virtual Helm1}
\begin{cases}
& \nabla\cdot(\sigma_\varepsilon(x)\nabla u_\varepsilon(x))+\omega^2 q_\varepsilon(x) u_\varepsilon(x)=f_\varepsilon(x),\quad x\in\mathbb{R}^N,\\
& u_\varepsilon(x)=u_\varepsilon^s(x)+u^i(x;\omega),\qquad x\in \mathbb{R}^N\backslash \overline{D_\varepsilon},\\
& \displaystyle{\lim_{|x|\rightarrow\infty}|x|^{(N-1)/2}\left\{\frac{\partial u_\varepsilon^s(x)}{\partial |x|}-i\omega u_\varepsilon^s(x)\right\}=0},
\end{cases}
\end{equation}
where
\begin{equation}\label{eq:virtual medium}
\sigma_\varepsilon(x), q_\varepsilon(x)=\begin{cases}
I, 1\quad & \mbox{in\ \ $(D_\varepsilon)^c$},\\
\widetilde{\sigma}_l, \widetilde{q}_l\quad & \mbox{in $D_\varepsilon\backslash D_{\varepsilon/2}$},\\
\widetilde{\sigma}_a, \widetilde{q}_a\quad & \mbox{in\ \ $D_{\varepsilon/2}$},
\end{cases}
\end{equation}
with $\widetilde{\sigma}$ and $\widetilde{q}_l$ given, respectively, in (\ref{eq:lossy virtual}) and (\ref{eq:lossy modulus}), and $f_\varepsilon=\varepsilon^{-N}F_\varepsilon^* f$, and
\begin{equation}\label{eq:virtual target}
\widetilde{\sigma}_a(x)=\varepsilon^{2-N}\sigma_a(x/\varepsilon),\quad \widetilde{q}_a(x)=\varepsilon^{-N}q_a(x/\varepsilon),\quad x\in D_{\varepsilon/2}.
\end{equation}
\end{lem}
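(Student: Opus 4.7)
The plan is to apply Theorem~\ref{thm:transoptics} in the inverse direction, taking $F:=F_\varepsilon^{-1}$ in the statement of the theorem, so that the physical field $u$ is pulled back to the virtual field $u_\varepsilon=u\circ F_\varepsilon=(F_\varepsilon^{-1})_*u$. Because $F_\varepsilon$ is globally bi-Lipschitz and orientation-preserving, equals the identity on $\Omega^c$, and equals the rescaling $x\mapsto x/\varepsilon$ on $D_\varepsilon$, one can apply the theorem on any large ball containing $\overline{\Omega}$ (with $F_\varepsilon=\mathrm{Id}$ elsewhere). Since $F_\varepsilon|_{\Omega^c}=\mathrm{Id}$, the decomposition $u_\varepsilon=u_\varepsilon^s+u^i$ and the Sommerfeld radiation condition transfer verbatim to the virtual frame; extending $u_\varepsilon^s:=u_\varepsilon-u^i$ into $\overline{\Omega}\setminus D_\varepsilon$ gives the decomposition on all of $(D_\varepsilon)^c$ as asserted in \eqref{eq:virtual Helm1}.

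Next I would verify the virtual coefficients region by region. On $\Omega^c$, where $F_\varepsilon=\mathrm{Id}$, one trivially recovers $(I,1)$. On $\overline{\Omega}\setminus D_\varepsilon$, the map $F_\varepsilon^{(1)}$ carries this annulus onto $\overline{\Omega}\setminus D$, and \eqref{eq:pushc} defines the physical medium there as the push-forward of $(I,1)$; pulling back by $(F_\varepsilon^{(1)})^{-1}$ accordingly restores $(I,1)$, since push-forward and pull-back of uniformly elliptic tensors are mutual inverses under bi-Lipschitz maps. On $D_\varepsilon\setminus D_{\varepsilon/2}$, the same observation applied to $F_\varepsilon^{(2)}$ recovers $(\widetilde{\sigma}_l,\widetilde{q}_l)$ from \eqref{eq:lossy push}--\eqref{eq:lossy modulus}. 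Finally, on $D_{\varepsilon/2}$ the physical coefficients $(\sigma_a,q_a)$, supported in $D_{1/2}$, are pulled back via the scaling $F_\varepsilon^{(2)}(x)=x/\varepsilon$; substituting $DF_\varepsilon^{-1}=\varepsilon I$ and $|\det DF_\varepsilon^{-1}|=\varepsilon^N$ into \eqref{eq:pushforward} gives exactly $\widetilde{\sigma}_a(x)=\varepsilon^{2-N}\sigma_a(x/\varepsilon)$ and $\widetilde{q}_a(x)=\varepsilon^{-N}q_a(x/\varepsilon)$, matching \eqref{eq:virtual target}.

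For the source, Theorem~\ref{thm:transoptics} yields $\widetilde{f}=(f/|\det DF|)\circ F^{-1}$ with $F=F_\varepsilon^{-1}$, equivalently $\widetilde{f}(x)=|\det DF_\varepsilon(x)|\,f(F_\varepsilon(x))$. Since $\mathrm{supp}(f)\subset D_{1/2}$ and $|\det DF_\varepsilon|=\varepsilon^{-N}$ on $D_\varepsilon$, this reduces to $\widetilde{f}(x)=\varepsilon^{-N}f(x/\varepsilon)=\varepsilon^{-N}(F_\varepsilon^* f)(x)=f_\varepsilon(x)$, with support in $D_{\varepsilon/2}$ as required. The only real bookkeeping hurdle is to check that the piecewise transformation patches into a single $H^1_{loc}(\mathbb{R}^N)$ solution of the virtual equation; this follows because $u\in H^1_{loc}$ and bi-Lipschitz $F_\varepsilon$ force $u_\varepsilon\in H^1_{loc}$, and the weak-form transmission conditions across $\partial\Omega$, $\partial D_\varepsilon$ and $\partial D_{\varepsilon/2}$ are automatically preserved under bi-Lipschitz changes of variables, so no separate matching argument is needed.
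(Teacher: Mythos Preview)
Your proposal is correct and follows exactly the approach the paper indicates: the paper's own ``proof'' of this lemma is merely the single phrase ``By using Theorem~\ref{thm:transoptics}, we have by direct verification,'' and what you have written is precisely that direct verification spelled out region by region, including the correct handling of the source term and of the decomposition $u_\varepsilon=u_\varepsilon^s+u^i$ on $(D_\varepsilon)^c$.
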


In the sequel, we let $A_\varepsilon(\hat{x})$ denote the scattering amplitude corresponding to $u_\varepsilon$.
Since $u=u_\varepsilon$ in $\Omega^c$, one sees that
\begin{equation}\label{eq:eqivalence}
A(\hat{x})=A_\varepsilon(\hat{x}),\quad \hat{x}\in\mathbb{S}^{N-1}.
\end{equation}
Hence, in order assess the near-cloaking performance, i.e., in order to evaluate the scattering amplitude $A(\hat{x})$ to the physical problem (\ref{eq:Helm1}), it suffices for us to evaluate the scattering amplitude $A_\varepsilon(\hat{x})$ to the virtual problem (\ref{eq:virtual Helm1}).
Throughout the rest of the paper, we let
\begin{equation}\label{eq:scattered}
u_\varepsilon^-=u_\varepsilon|_{D_\varepsilon}\in H^1(D_\varepsilon),\quad u_\varepsilon^+=u_\varepsilon|_{\mathbb{R}^N\backslash\overline{D}_\varepsilon}\in H_{loc}^1(\mathbb{R}^N\backslash\overline{D}_\varepsilon)
\end{equation}
and 
\begin{equation}\label{eq:s2}
u_\varepsilon^s=u_\varepsilon^+-u^i\in H_{loc}^1(\mathbb{R}^N\backslash\overline{D}_\varepsilon).
\end{equation}
By straightforward calculations, it can be shown from (\ref{eq:virtual Helm1}) that
\begin{equation}\label{eq:Helm2}
\begin{cases}
& \Delta u_\varepsilon^s+\omega^2 u_\varepsilon^s=0\hspace*{4.05cm} \mbox{in\ \ \ $\mathbb{R}^N\backslash \overline{D}_\varepsilon$},\\
& \nabla\cdot(\widetilde{\sigma}_l\nabla u_\varepsilon^-)+\omega^2 \widetilde{q}_l u_\varepsilon^-=0\hspace*{2.38cm}\mbox{in\ \ \ $D_\varepsilon\backslash\overline{D}_{\varepsilon/2}$},\\
& \nabla\cdot(\widetilde{\sigma}_a\nabla u_\varepsilon^-)+\omega^2 \widetilde{q}_a u_\varepsilon^-=f_\varepsilon\hspace*{2.12cm}\mbox{in\ \ \ $D_{\varepsilon/2}$},\\
& u_\varepsilon^s=u_\varepsilon^--u^i \hspace*{4.55cm} \mbox{on\ \ \ $\partial D_\varepsilon$},\\
& \displaystyle{ \frac{\partial u_\varepsilon^s}{\partial n}=\sum_{i,j=1}^N n_i\widetilde{\sigma}_l^{ij}\partial_j u_\varepsilon^-}-\frac{\partial u^i}{\partial n}\hspace*{2.0cm} \mbox{on\ \ \ $\partial D_\varepsilon$},\\
 &\displaystyle{\lim_{|x|\rightarrow\infty}|x|^{(N-1)/2}\left\{\frac{\partial u_\varepsilon^s}{\partial |x|}-i\omega u_\varepsilon^s\right\}=0},
\end{cases}
\end{equation}
where $n(x)=(n_i(x))_{i=1}^N$ is the exterior unit normal vector to $\partial D_\varepsilon$. Obviously, $A_\varepsilon(\hat{x})$ could be read off from the large $|x|$ asymptotics of $u_\varepsilon^s(x)$.

We shall first consider the cloaking of passive medium, i.e., $f=0$ in (\ref{eq:Helm1}) (correspondingly, $f_\varepsilon=0$ in (\ref{eq:virtual Helm1}) and (\ref{eq:Helm2})).  

\begin{thm}\label{thm:main1}
Suppose $f=0$ and $\sigma_a, q_a$ are arbitrary but regular. Let $A_\varepsilon(\hat{x})$ be the scattering amplitude to (\ref{eq:Helm2}). Let $B_R$, $R\in\mathbb{R}_+$, be a central ball of radius $R$ such that $\Omega\subset B_R$. Then there exists $\varepsilon_0\in\mathbb{R}_+$ such that when $\varepsilon<\varepsilon_0$
\begin{equation}\label{eq:estimate main 1}
|A_\varepsilon(\hat{x})|\leq C \varepsilon^{\min\{N+2r-4, N\}}\|u^i\|_{H^1(B_R)},\quad \forall \hat{x}\in\mathbb{S}^{N-1},
\end{equation}
where $C$ is positive constant independent of $\varepsilon, r$, $\sigma_a$ and $q_a$.
\end{thm}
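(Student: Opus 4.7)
\medskip

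\noindent\textbf{Proof plan for Theorem~\ref{thm:main1}.} My strategy is to reduce the scattering amplitude to a boundary integral on the small sphere $\partial D_\varepsilon$ and then close the estimate with an energy identity that exploits the dissipation engineered into the lossy layer.

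\emph{Step 1 (Boundary representation).} I will apply Green's integral representation to the outgoing scattered field $u_\varepsilon^s$ in $(D_\varepsilon)^c$ and use the transmission conditions in \eqref{eq:Helm2}. Since both the incident wave $u^i$ and the plane-wave test function $e^{-i\omega\hat x\cdot y}$ are entire Helmholtz solutions, Green's second identity applied to them on $D_\varepsilon$ kills the $u^i$ contribution and yields
\[
A_\varepsilon(\hat x)=c_N\int_{\partial D_\varepsilon}\Bigl[u_\varepsilon^-(y)\,\partial_\nu e^{-i\omega\hat x\cdot y}-e^{-i\omega\hat x\cdot y}\bigl(\nu\cdot\widetilde\sigma_l\nabla u_\varepsilon^-\bigr)\Bigr]ds(y).
\]
In this representation the explicit $\varepsilon$-powers of the surface measure, of the plane wave, and of the coefficient $\widetilde\sigma_l=O(\varepsilon^r)$ are already visible.

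\emph{Step 2 (Rescaling and dissipative energy estimate).} I will set $v(z):=u_\varepsilon^-(\varepsilon z)$ for $z\in D$; a straightforward chain rule computation (matching \eqref{eq:lossy virtual}--\eqref{eq:lossy modulus} and \eqref{eq:virtual target}) transforms the interior system into
\[
\nabla_z\cdot(M(z)\nabla_z v)+\omega^2\varepsilon^{2-r}(\alpha+i\beta)\,v=0\ \ \text{in }D\setminus\overline{D}_{1/2},
\]
together with $\nabla_z\cdot(\sigma_a\nabla_z v)+\omega^2 q_a v=0$ in $D_{1/2}$ and natural transmission on $\partial D_{1/2}$. The boundary condition on $\partial D$ is inherited from the exterior Helmholtz problem via its Dirichlet-to-Neumann map $\Lambda_\varepsilon^+$. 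I will then multiply the PDE for $u_\varepsilon^-$ by $\overline{u_\varepsilon^-}$, integrate over $D_\varepsilon$, couple the boundary flux to $\Lambda_\varepsilon^+$, and take imaginary parts. The real symmetric tensors drop out; the imaginary parts $\beta,\ \Im\widetilde{q}_a\ge 0$ survive on the left, and the exterior DtN contributes the nonnegative radiated-energy term $\omega\|A_\varepsilon\|^2_{L^2(\mathbb S^{N-1})}$. Combined with the real part of the same identity (uniform ellipticity of $M$ and $\sigma_a$) and Cauchy--Schwarz on the $u^i$ data, this will close into an a priori bound of the form
\[
\|v\|_{H^1(D)}+\varepsilon^{(2-r)/2}\|v\|_{L^2(D\setminus D_{1/2})}\,\le\,C\,\varepsilon^{\kappa}\,\|u^i\|_{H^1(B_R)},
\]
uniformly in the arbitrary (but regular) target $(\sigma_a,q_a)$, since those data enter only through nonnegative terms on the left and a coercive interior form whose constants can be absorbed.

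\emph{Step 3 (Tracking the exponents).} Substituting the bound of Step 2 into the representation of Step 1 and changing variables $y=\varepsilon z$, the surface measure contributes $\varepsilon^{N-1}$ and the coefficient $\widetilde\sigma_l$ an additional $\varepsilon^{r-1}$ after the chain rule on gradients. The first (Dirichlet) term is shown to be $O(\varepsilon^N)$ by using the mean-zero cancellation $\int_{\partial D}\hat x\cdot\nu\,ds=0$ against the zeroth-order mode of $v$ (equivalently, by rewriting $\int_{\partial D}(\hat x\cdot\nu)v\,ds=\int_D\hat x\cdot\nabla v\,dz$) together with the a priori control of $\|\nabla v\|_{L^2(D)}$. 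The second (flux) term is shown to be $O(\varepsilon^{N+2r-4})$ by combining the explicit scaling $\varepsilon^{N+r-2}$ with the additional dissipative $L^2$-gain on $v$ in the lossy shell. Taking the worse of the two exponents yields $\min\{N+2r-4,N\}$, which is strictly positive precisely under the standing hypothesis $r>2-N/2$.

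\emph{Main obstacle.} The hard part will be the uniform-in-$\varepsilon$ control of the exterior Helmholtz DtN $\Lambda_\varepsilon^+$ on the shrinking sphere $\partial D_\varepsilon$: one needs both its imaginary-part monotonicity (which is what moves the radiation quantity $\omega\|A_\varepsilon\|^2$ to the left of the energy identity) and mapping-norm bounds on the $u^i$-generated boundary data after pulling back to $\partial D$. The convexity and $C^2$ regularity of $D$ secure uniform trace and Poincar\'e constants on $\partial D$, but dimension $N=2$ will require separate treatment of the zero mode because of the logarithmic 2D Helmholtz fundamental solution, and the regime $r>2$ will require verifying well-posedness of the singularly perturbed rescaled interior problem (whose zeroth-order coefficient $\omega^2\varepsilon^{2-r}\to 0$) with constants independent of $\varepsilon$.
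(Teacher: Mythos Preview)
Your framework is reasonable and Step~1 is correct, but Step~2 contains a genuine gap that propagates into Step~3.

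\medskip

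\textbf{The gap.} You claim an a priori bound on $\|v\|_{H^1(D)}$ with constant independent of $(\sigma_a,q_a)$, arguing that ``those data enter only through nonnegative terms on the left and a coercive interior form whose constants can be absorbed.'' This is not true. Taking the real part of the energy identity on $D_{1/2}$ produces
\[
\int_{D_{1/2}}\sigma_a\nabla v\cdot\nabla\bar v\,dz-\omega^2\int_{D_{1/2}}\Re q_a\,|v|^2\,dz,
\]
where the ellipticity lower bound of $\sigma_a$ may be arbitrarily small and $\Re q_a$ arbitrarily large, so no coercive control of $\|\nabla v\|_{L^2(D_{1/2})}$ is available with a constant independent of the target. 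This is precisely the ``resonant inclusion'' phenomenon discussed in Remark~\ref{rem:passive}: the lossy layer shields the exterior from resonance, but it does \emph{not} give uniform interior $H^1$ bounds. Since your treatment of the Dirichlet term in Step~3 rewrites $\int_{\partial D}(\hat x\cdot\nu)v\,ds=\int_D\hat x\cdot\nabla v\,dz$ and invokes $\|\nabla v\|_{L^2(D)}$, that step fails as well.

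\medskip

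\textbf{How the paper closes the argument.} The paper never looks inside $D_{1/2}$. The imaginary part of the energy identity (Lemma~\ref{lem:main12}) yields only $\|u_\varepsilon^-\|_{L^2(D_\varepsilon\setminus D_{\varepsilon/2})}$, controlled by boundary data on $\partial B_R$. A separate duality lemma (Lemma~\ref{lem:main11}) then converts this annular $L^2$ control into $H^{-3/2}(\partial D)$ control of the conormal flux: one tests against an $H^2$ extension $w$ of an arbitrary $\varphi\in H^{3/2}(\partial(D\setminus\overline{D}_{1/2}))$ chosen with $\partial_\nu w=0$, so that after integration by parts only $\|v\|_{L^2(D\setminus D_{1/2})}$ survives---no information from $D_{1/2}$ is needed. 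Finally, rather than bounding the Dirichlet and flux integrals on $\partial D_\varepsilon$ directly, the paper decomposes $u_\varepsilon^s=v_1-v_2$ into two exterior Neumann problems with data $\partial_\nu u_\varepsilon^+$ and $\partial_\nu u^i$ respectively, and applies small sound-hard scattering estimates (Lemma~\ref{lem:sh estimate}): $v_2$ gives the clean $O(\varepsilon^N)$ contribution directly, and $v_1$ contributes $\varepsilon^{N-1}$ times the $H^{-3/2}$ flux norm. A bootstrap on $\partial B_R$ then closes the estimate. The key idea you are missing is to replace ``control $v$ on all of $D$'' by ``control $v$ only in the lossy annulus and convert to boundary flux by duality.''
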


\begin{rem}\label{rem:passive}
By Theorem~\ref{thm:main1} and (\ref{eq:eqivalence}), we see that our construction underlying (\ref{eq:Helm1}) produces an approximate cloaking device which is within $\varepsilon^{\min\{N+2r-4, N\}}$ of the perfect cloaking. Moreover, the estimate in (\ref{eq:estimate main 1}) indicates that one can cloak an arbitrary medium, which is of critical importance from a practical viewpoint. It is mentioned in Introduction that in \cite{KOVW}, the authors showed that there are cloak-busting inclusions which defy the attempt to achieve the near-cloak. Specifically, it is shown that no matter how small a region is, there exists certain inhomogeneity supported in that region such that it can produce significant wave scattering. For the boundary value problem considered in \cite{KOVW}, this is shown to be caused by resonances. Hence, a damping mechanism should be incorporated in order to defeat such `resonant' inclusions. In Remark~\ref{rem:active} in the following, one will see that in addition to the `resonant' inclusions, there are more `cloak-busting' inclusions if one intends to cloak an active content.   
\end{rem}

In order to prove Theorem~\ref{thm:main1}, we first derive the following lemma. Since the following lemma will also be needed in our subsequent study on cloaking of radiating/active objects, we would like to emphasize that it holds for the general case with $f$ not necessarily being zero.

\begin{lem}\label{lem:main11}
Suppose that 
\[
\alpha(x)\leq \overline{\alpha}_0\quad \mbox{and}\quad \underline{\beta}_0\leq \beta(x)\leq \overline{\beta}_0,\quad x\in D\backslash D_{1/2},
\]
where $\overline{\alpha}_0$, $\underline{\beta}_0$ and $\overline{\beta}_0$ are positive constants.
Let $u_\varepsilon\in H_{loc}^1(\mathbb{R}^N)$ be the solution to (\ref{eq:Helm2}). Then we have
\begin{align}
&\left\|\left(\sum_{i,j=1}^Nn_i\widetilde{\sigma}_l^{ij}\partial_j u_\varepsilon^-\right)(\varepsilon\ \cdot)\right\|_{H^{-3/2}(\partial (D\backslash \overline{D}_{1/2}))} \nonumber\\
\leq &\ \varepsilon^{r-1-N/2}\left(C+\sqrt{\overline{\alpha}_0^2+\overline{\beta}_0^2}\omega^2\varepsilon^{2-r}\right)\|u_\varepsilon^-\|_{L^2(D_\varepsilon\backslash D_{\varepsilon/2})},\label{eq:controlN}
\end{align}
where $C$ is a positive constant dependent only on $\gamma$, $g$ and $D$, but independent of $\varepsilon$, $r$, $\alpha$ and $\beta$. 
\end{lem}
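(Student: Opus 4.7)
\textbf{Proof plan for Lemma \ref{lem:main11}.}
My plan is to rescale the problem to the fixed annular region $D\backslash\overline{D}_{1/2}$, where the lossy layer equation becomes a standard divergence-form elliptic equation with a small lower-order term, and then extract the $H^{-3/2}$ bound on the conormal derivative by a duality argument against $H^{3/2}$ boundary data.

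First, I would set $\tilde u(y)=u_\varepsilon^-(\varepsilon y)$ for $y\in D\backslash\overline{D}_{1/2}$. Using \eqref{eq:lossy virtual} and \eqref{eq:lossy modulus}, the second equation of \eqref{eq:Helm2} transforms, after dividing by $\varepsilon^{r-2}$, into
\begin{equation*}
\nabla_y\cdot(S(y)\nabla_y\tilde u)+\omega^2\varepsilon^{2-r}(\alpha(y)+i\beta(y))\,\tilde u=0\quad\text{in }D\backslash\overline{D}_{1/2},
\end{equation*}
where $S(y):=\gamma(y)Pr(n(y'))+g(y)(I-Pr(n(y')))$ is symmetric, uniformly elliptic and $C^2$, with ellipticity constants depending only on $\gamma$, $g$ and $D$. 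A direct chain-rule computation gives the conormal identity
\begin{equation*}
\Big(\textstyle\sum_{i,j}n_i\widetilde\sigma_l^{ij}\partial_j u_\varepsilon^-\Big)(\varepsilon y)=\varepsilon^{r-1}\,\big(n(y)\cdot S(y)\nabla_y\tilde u(y)\big),\qquad y\in\partial D,
\end{equation*}
so (after noting the corresponding identity on the inner boundary $\partial D_{1/2}$ comes from the same scaling) the lemma reduces to bounding $\|\partial_\nu^S\tilde u\|_{H^{-3/2}(\partial(D\backslash\overline{D}_{1/2}))}$ by a multiple of $\|\tilde u\|_{L^2(D\backslash D_{1/2})}$, with the $L^2$ norm of the forcing $h:=-\omega^2\varepsilon^{2-r}(\alpha+i\beta)\tilde u$ absorbed into the second term of \eqref{eq:controlN}.

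For the duality argument, pick $\varphi\in H^{3/2}(\partial(D\backslash\overline{D}_{1/2}))$ and, via the standard $H^2$-trace right-inverse, lift it to $\Phi\in H^2(D\backslash\overline{D}_{1/2})$ with $\Phi|_{\partial}=\varphi$, $\partial_\nu^S\Phi|_{\partial}=0$, and $\|\Phi\|_{H^2}\le C\|\varphi\|_{H^{3/2}}$, where $C$ depends only on $D$ and $S$. Two integrations by parts, exploiting the symmetry of $S$, yield
\begin{equation*}
\langle\partial_\nu^S\tilde u,\varphi\rangle=\int_{D\backslash D_{1/2}}h\,\Phi\,dx-\int_{D\backslash D_{1/2}}\tilde u\,\nabla\!\cdot\!(S\nabla\Phi)\,dx+\int_{\partial}\tilde u\,\partial_\nu^S\Phi\,ds,
\end{equation*}
and the last boundary term vanishes by our choice of $\Phi$. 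Cauchy--Schwarz and the bound $\|\nabla\!\cdot\!(S\nabla\Phi)\|_{L^2}\le C\|\Phi\|_{H^2}$ then give
\begin{equation*}
\|\partial_\nu^S\tilde u\|_{H^{-3/2}(\partial)}\le C\|\tilde u\|_{L^2(D\backslash D_{1/2})}+\sqrt{\overline\alpha_0^2+\overline\beta_0^2}\,\omega^2\varepsilon^{2-r}\|\tilde u\|_{L^2(D\backslash D_{1/2})},
\end{equation*}
where the second term uses the hypotheses $\alpha\le\overline\alpha_0$ and $\beta\le\overline\beta_0$.

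Finally, I would convert back: the change of variables $x=\varepsilon y$ gives $\|\tilde u\|_{L^2(D\backslash D_{1/2})}=\varepsilon^{-N/2}\|u_\varepsilon^-\|_{L^2(D_\varepsilon\backslash D_{\varepsilon/2})}$, and combining this with the $\varepsilon^{r-1}$ factor from the conormal identity produces exactly the prefactor $\varepsilon^{r-1-N/2}$ in \eqref{eq:controlN}. The subtle point I expect to spend the most care on is the construction of the lifting $\Phi$: the outer boundary $\partial D$ is $C^2$ by assumption, but the inner boundary $\partial D_{1/2}$ is the scaled copy, so I must be sure the $H^2$-lifting with independently prescribed Dirichlet and $S$-conormal traces is available on the full two-component boundary, and that the resulting constant $C$ depends only on $\gamma$, $g$ and $D$ (and in particular is independent of $\varepsilon$, $r$, $\alpha$, $\beta$), as claimed.
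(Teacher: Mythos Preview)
Your proposal is correct and follows essentially the same approach as the paper: rescale to the fixed annulus $D\backslash\overline{D}_{1/2}$, estimate the conormal trace by duality against an $H^2$ lifting of the $H^{3/2}$ test function with vanishing (co)normal trace, apply Green's second identity for the operator $\nabla\cdot(S\nabla\cdot)$, and undo the scaling. The only cosmetic difference is that the paper prescribes the ordinary normal derivative $\partial_n w=0$ for the lifting and then observes that the special structure of $S$ forces $n\cdot S\nabla w=\gamma\,\partial_n w=0$, whereas you impose $\partial_\nu^S\Phi=0$ directly; since $\gamma$ is bounded below these conditions are equivalent, and the paper's choice has the mild advantage that the lifting constant visibly depends only on $D$.
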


\begin{proof}
We shall make use of the duality argument by noting that
\begin{equation}\label{eq:dual}
\begin{split}
&\left\|\left(\sum_{i,j=1}^Nn_i\widetilde{\sigma}_l^{ij}\partial_j u_\varepsilon^-\right)(\varepsilon\ \cdot)\right\|_{H^{-3/2}(\partial (D\backslash \overline{D}_{1/2}))} \\
=&\sup_{\|\varphi\|_{H^{3/2}(\partial (D\backslash \overline{D}_{1/2}) )}\leq 1}\left|\int_{\partial(D\backslash \overline{D}_{1/2})} \left(\sum_{i,j=1}^Nn_i\widetilde{\sigma}_l^{ij}\partial_j u_\varepsilon^-\right)(\varepsilon x) \varphi(x)\ ds_x \right|.
\end{split}
\end{equation}
For any $\varphi\in H^{3/2}(\partial(D\backslash D_{1/2}))$, there exists $w\in H^2(D\backslash\overline{D}_{1/2})$ such that
\begin{align*}
&(i)~~w=\varphi\ \ \mbox{on}\ \ \partial D\cup \partial D_{1/2};\\
&(ii)~~\frac{\partial w}{\partial n}=0\ \ \mbox{on}\ \ \partial D\cup\partial D_{1/2};\\
&(iii)~~\|w\|_{H^2(D\backslash\overline{D}_{1/2})}\leq C\|\varphi\|_{H^{3/2}(\partial (D\backslash \overline{D}_{1/2}))}.
\end{align*}
Let
\[
\widehat{\sigma}_l(x)=\varepsilon^{2-r}\sigma_l(x)=(\gamma(x)Pr(n(x'))+g(x)(I-Pr(n(x')))),\quad x\in D\backslash D_{1/2},
\]
where and in the following $\sigma_l, q_l$ are given in (\ref{eq:lossy effective 1})--(\ref{eq:lossy effective 2}). 
We first note that
\[
\sum_{i,j=1}^N n_i\widehat{\sigma}_l^{ij}\partial_j w=\gamma (n\cdot\nabla w)=0\quad\mbox{on\ \ $\partial D\cup\partial D_{1/2}$}.
\]
Set $v(x)=u_\varepsilon^{-}(\varepsilon x)$ for $x\in D_{\varepsilon}\backslash\overline{D}_{\varepsilon/2}$, and it is directly verfied that 
\begin{equation}\label{eq:t1}
\nabla\cdot(\widehat{\sigma}_l \nabla v)=-\omega^2\varepsilon^{2-r} q_l v\quad \mbox{in\ \ $D\backslash\overline{D}_{1/2}$}.
\end{equation}
Now, we have
\begin{align*}
& \left|\int_{\partial(D\backslash \overline{D}_{1/2})} \left(\sum_{i,j=1}^Nn_i\widetilde{\sigma}_l^{ij}\partial_j u_\varepsilon^-\right)(\varepsilon x) \varphi(x)\ ds_x\right| \\
=& \varepsilon^{r-1}\left|\int_{\partial (D\backslash\overline{D}_{1/2})} \left(\sum_{i,j=1}^N n_i\widehat{\sigma}_l^{ij}\partial_j v\right) w\  ds_x-\int_{\partial (D\backslash \overline{D}_{1/2})} \left(\sum_{i,j=1}^N n_i\widehat{\sigma}_l^{ij}\partial_j w\right) v\ ds_x \right|\\
=& \varepsilon^{r-1}\left|\int_{D\backslash\overline{D}_{1/2}} \nabla\cdot(\widehat{\sigma}_l\nabla v) w\ dx-\int_{D\backslash\overline{D}_{1/2}} \nabla\cdot(\widehat{\sigma}_l \nabla w) v\ dx\right|\\
=&\varepsilon^{r-1}\left|    \omega^2\varepsilon^{2-r}\int_{D\backslash\overline{D}_{1/2}} q_l v w\ dx+\int_{D\backslash\overline{D}_{1/2}} \nabla\cdot(\widehat{\sigma}_l\nabla w) v\ dx   \right|\\
\leq & \varepsilon^{r-1}\bigg[ \omega^2\varepsilon^{2-r}\sqrt{\overline{\alpha}_0^2+\overline{\beta}_0^2} \|w\|_{L^2(D\backslash D_{1/2})}\|v\|_{L^2(D\backslash D_{1/2})}\\
 &\ \ \ +\|\nabla\cdot(\widehat{\sigma}_l\nabla w)\|_{L^2(D\backslash\overline{D}_{1/2})}\|v\|_{L^2(D\backslash D_{1/2})}\bigg]\\
 \leq &\ \varepsilon^{r-1}  \left(C+\sqrt{\overline{\alpha}_0^2+\overline{\beta}_0^2}\omega^2\varepsilon^{2-r}\right)\|w\|_{H^2(D\backslash\overline{D}_{1/2})}\|v\|_{L^2(D\backslash D_{1/2})}\\
 \leq &\ \varepsilon^{r-1-N/2}\left(C+\sqrt{\overline{\alpha}_0^2+\overline{\beta}_0^2}\omega^2\varepsilon^{2-r}\right)\|u_\varepsilon^-\|_{L^2(D_\varepsilon\backslash D_{\varepsilon/2})} \|\varphi\|_{H^{3/2}(\partial D)},
\end{align*}
where in the last inequality we have made use of the fact that
\[
\|v\|_{L^2(D\backslash D_{1/2})}=\|u_\varepsilon^-(\varepsilon\cdot)\|_{L^2(D\backslash D_{1/2})}=\varepsilon^{-N/2}\|u_\varepsilon^-\|_{L^2(D_\varepsilon\backslash D_{\varepsilon/2})}.
\]
Hence, we have verified (\ref{eq:controlN}). 

The proof is completed.
\end{proof}

Next, let $B_R$, $R\in\mathbb{R}_+$, be the central ball in Theorem~\ref{thm:main1}. Without loss of generality, we assume that $\omega^2$ is not a Dirichlet eigenvalue for the negative Laplacian in $B_R$. Consider the Helmholtz equation,
\begin{equation}\label{eq:auxiliary}
\begin{cases}
& \Delta v+\omega^2 v=0\hspace*{2.35cm}\mbox{in\ \ $\mathbb{R}^N\backslash\overline{B}_R$},\\
& \displaystyle{\frac{\partial v}{\partial n}=\psi\in H^{-1/2}(\partial B_R)\qquad \mbox{on\ \ $\partial B_R$},}\\
& \displaystyle{\lim_{|x|\rightarrow\infty}|x|^{(N-1)/2}\left\{\frac{\partial v}{\partial |x|}-i\omega v\right\}=0}.
\end{cases}
\end{equation}
Define the Neumann-to-Dirichlet map by,
\[
\Lambda(\psi)=v|_{\partial B_R}\in H^{1/2}(\partial B_R),
\]
where $v\in H^1_{loc}(\mathbb{R}^N\backslash\overline{B}_R)$ is the unique solution to (\ref{eq:auxiliary}). It is known that $\Lambda$ is bounded and invertible from $H^{-1/2}(\partial B_R)$ to $H^{1/2}(\partial B_R)$ (see \cite{Mcl} and \cite{Ned}).

\begin{lem}\label{lem:main12}
Suppose that 
\[
\alpha(x)\leq \overline{\alpha}_0\quad \mbox{and}\quad \underline{\beta}_0\leq \beta(x)\leq \overline{\beta}_0,\quad x\in D\backslash D_{1/2},
\]
where $\overline{\alpha}_0$, $\underline{\beta}_0$ and $\overline{\beta}_0$ are positive constants.
Let $u_\varepsilon\in H_{loc}^1(\mathbb{R}^N)$ be the solution to (\ref{eq:Helm2}) with $f_\varepsilon=0$. Then we have
\begin{equation}\label{eq:l2 estimate}
\begin{split}
&\omega^2\underline{\beta}_0\int_{D_\varepsilon\backslash D_{\varepsilon/2}} |u_\varepsilon^-|^2\ d x \leq\ C\bigg( \left\|\Lambda\left(\frac{\partial u_\varepsilon^s}{\partial n}\right)\right\|_{H^{1/2}(\partial B_R)} \left\| \frac{\partial u^i}{\partial n} \right\|_{H^{-1/2}(\partial B_R)} \\
&+ \left\|\frac{\partial u_\varepsilon^s}{\partial n}\right\|_{H^{-1/2}(\partial B_R)}\left\| u^i \right\|_{H^{1/2}(\partial B_R)}+\varepsilon^N\|u^i\|_{H^1(B_R)}^2\\
& +\left\|\frac{\partial u_\varepsilon^s}{\partial n}\right\|_{H^{-1/2}(\partial B_R)} \left\|\Lambda\left(\frac{\partial u_\varepsilon^s}{\partial n}\right)\right\|_{H^{1/2}(\partial B_R)}\bigg),
\end{split}
\end{equation}
where $C$ is a positive constant depending only on $R$ and $\omega$. 
\end{lem}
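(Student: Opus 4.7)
The plan is to derive the estimate from the imaginary part of the standard energy identity, obtained by testing the equation $\nabla\cdot(\sigma_\varepsilon\nabla u_\varepsilon)+\omega^2 q_\varepsilon u_\varepsilon=0$ against $\overline{u_\varepsilon}$ on $B_R$, and then re-expressing the resulting boundary integral on $\partial B_R$ in the form dictated by the right-hand side of \eqref{eq:l2 estimate} via the Neumann-to-Dirichlet map $\Lambda$.

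First I would multiply the equation by $\overline{u_\varepsilon}$, integrate over $B_R$, and integrate by parts. Since $\partial B_R\subset\Omega^c$ one has $\sigma_\varepsilon=I$ on $\partial B_R$, and because $\sigma_\varepsilon$ is real symmetric the quadratic form $\sigma_\varepsilon\nabla u_\varepsilon\cdot\nabla\overline{u_\varepsilon}$ is real; taking imaginary parts therefore eliminates the bulk gradient term and leaves
\begin{equation*}
\omega^2\int_{B_R}\Im q_\varepsilon\,|u_\varepsilon|^2\,dx \,=\, -\Im\int_{\partial B_R}\frac{\partial u_\varepsilon}{\partial n}\,\overline{u_\varepsilon}\,ds.
\end{equation*}
Using $\Im q_\varepsilon\geq\underline{\beta}_0$ on $D_\varepsilon\setminus D_{\varepsilon/2}$, $\Im q_\varepsilon\geq 0$ on the regular target region $D_{\varepsilon/2}$, and $\Im q_\varepsilon=0$ outside $D_\varepsilon$, this bounds the left-hand side of \eqref{eq:l2 estimate} by $|\Im\int_{\partial B_R}\partial_n u_\varepsilon\,\overline{u_\varepsilon}\,ds|$.

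Next I would substitute $u_\varepsilon=u^i+u_\varepsilon^s$ on $\partial B_R$ and expand the boundary integral into four cross products. The $\partial_n u^i\,\overline{u^i}$ piece equals $\int_{B_R}\bigl(|\nabla u^i|^2-\omega^2|u^i|^2\bigr)\,dx$ by Green's first identity, hence is real and drops out of $\Im(\cdot)$. The remaining three pieces I would control by $H^{1/2}$--$H^{-1/2}$ duality on $\partial B_R$. The crucial observation is that $u_\varepsilon^s$ satisfies the Helmholtz equation and the Sommerfeld radiation condition on $\mathbb{R}^N\setminus\overline{B}_R$, so $u_\varepsilon^s|_{\partial B_R}=\Lambda\bigl(\partial_n u_\varepsilon^s|_{\partial B_R}\bigr)$; replacing every $H^{1/2}(\partial B_R)$-trace of $u_\varepsilon^s$ by $\Lambda(\partial_n u_\varepsilon^s)$ produces exactly the first, second, and fourth summands on the right-hand side of \eqref{eq:l2 estimate}.

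The term $\varepsilon^N\|u^i\|_{H^1(B_R)}^2$ does not appear in the three-term bound just outlined and is included as slack; it emerges naturally if one instead runs the same energy identity on $D_\varepsilon$---the natural choice when one wants to keep $(\widetilde{\sigma}_l\nabla u_\varepsilon^-)\cdot n=\partial_n u_\varepsilon^+$ visible on $\partial D_\varepsilon$---because then the analogous $\partial_n u^i\,\overline{u^i}$ piece equals $\int_{D_\varepsilon}(|\nabla u^i|^2-\omega^2|u^i|^2)\,dx$, crudely bounded by $\|u^i\|_{H^1(D_\varepsilon)}^2\leq C\varepsilon^N\|u^i\|_{H^1(B_R)}^2$ via interior regularity for Helmholtz solutions on the shrinking ball $D_\varepsilon\Subset B_R$. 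I anticipate the main technical obstacle to be the careful bookkeeping of conjugates and of the outward-normal orientations when transferring the $\partial D_\varepsilon$ boundary integrals to $\partial B_R$ through Green's second identity in $B_R\setminus\overline{D}_\varepsilon$, so that the cross terms collapse into the clean duality bounds involving $\Lambda(\partial_n u_\varepsilon^s)$ rather than leave residual $\partial D_\varepsilon$-traces.
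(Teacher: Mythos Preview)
Your argument is correct and is in fact a streamlined variant of the paper's proof. The paper does not test the full equation against $\overline{u_\varepsilon}$ on $B_R$ at once; instead it tests $\Delta u_\varepsilon^s+\omega^2 u_\varepsilon^s=0$ against $\overline{u_\varepsilon^s}$ on $B_R\setminus\overline{D}_\varepsilon$ and the interior equations against $\overline{u_\varepsilon^-}$ on $D_\varepsilon$, adds the two identities, and then spends several steps using the transmission conditions on $\partial D_\varepsilon$ together with Green's identity in $B_R\setminus\overline{D}_\varepsilon$ to push the $\partial D_\varepsilon$ boundary terms out to $\partial B_R$. That manipulation leaves a residual term $\int_{\partial D_\varepsilon} u^i\,\overline{\partial_n u^i}\,ds$, which the paper bounds in modulus by $C\varepsilon^N\|u^i\|_{H^1(B_R)}^2$---and this is exactly the origin of the $\varepsilon^N$ summand you correctly flagged as slack (indeed, that integral is real, so its imaginary part vanishes and the term is superfluous even in the paper's own route). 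Your single integration-by-parts on $B_R$ with the total field bypasses the transmission bookkeeping entirely, makes the $\partial_n u^i\,\overline{u^i}$ piece manifestly real on $\partial B_R$, and lands directly on the three nontrivial summands. The paper's split-domain computation buys nothing extra here, though it mirrors the layout reused for the active-source version in Lemma~\ref{lem:main22}.
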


\begin{proof}
By multiplying both sides of the first equation in (\ref{eq:s2}) by $\overline{u_\varepsilon^s}$ and by using integration by parts over $B_R\backslash\overline{D}_\varepsilon$, we have
\begin{equation}\label{eq:d1}
\begin{split}
&-\int_{B_R\backslash\overline{D}_\varepsilon} |\nabla u_\varepsilon^s|^2\ dx+\omega^2\int_{B_R\backslash\overline{D_\varepsilon}} |u_\varepsilon^s|^2\ dx\\
+&\int_{\partial B_R}\frac{\partial u_\varepsilon^s}{\partial n}\cdot \overline{u_\varepsilon^s}\ ds_x-\int_{\partial D_\varepsilon}\frac{\partial u_\varepsilon^s}{\partial n}\cdot\overline{u_\varepsilon^s}\ ds_x=0,
\end{split}
\end{equation}
where and in the sequel, $n=(n_i)_{i=1}^N$ denotes the exterior unit normal vector of the boundary of the concerned domain. Similarly, by multiplying both sides of the second and third equations in (\ref{eq:s2}) by $\overline{u_\varepsilon^-}$ and by using integration by parts over $D_\varepsilon$, we have
\begin{equation}\label{eq:d2}
\begin{split}
&-\int_{D_\varepsilon\backslash\overline{D}_{\varepsilon/2}} (\widetilde{\sigma}_l\nabla u_\varepsilon^-)\cdot(\nabla\overline{u_\varepsilon^-})\ dx+\omega^2\int_{D_\varepsilon\backslash\overline{D}_{\varepsilon/2}}\widetilde{q}_l |u_\varepsilon^-|^2\ dx\\
-&\int_{D_{\varepsilon/2}}(\widetilde{\sigma}_a\nabla u_\varepsilon^-)\cdot(\nabla \overline{u_\varepsilon^-})\ dx+\omega^2\int_{D_{\varepsilon/2}}\widetilde{q}_a|u_\varepsilon^-|^2\ dx\\
+& \int_{\partial D_\varepsilon}\left( \sum_{i,j=1}^N n_i\widetilde{\sigma}_l^{ij}\partial_j u_\varepsilon^- \right)\cdot \overline{u_\varepsilon^-}\ ds_x=0 .
\end{split}
\end{equation}
By adding (\ref{eq:d1}) and (\ref{eq:d2}) and then taking the imaginary parts of both sides of the resultant equation, we further have
\begin{equation}\label{eq:d3}
\begin{split}
& \omega^2\int_{D_\varepsilon\backslash D_{\varepsilon/2}} \Im \widetilde{q}_l |u_\varepsilon^-|^2\ dx+\omega^2\int_{D_{\varepsilon/2}} \Im \widetilde{q}_a |u_\varepsilon^-|^2\ dx\\
=&\Im\bigg\{  -\int_{\partial B_R}\frac{\partial u_\varepsilon^s}{\partial n}\cdot\overline{u_\varepsilon^s}\ ds_x  +\int_{\partial D_\varepsilon} \frac{\partial u_\varepsilon^s}{\partial n}\cdot\overline{u_\varepsilon^s}\ ds_x\\
&-\int_{\partial D_\varepsilon}\bigg(  \sum_{i,j=1}^N n_i\widetilde{\sigma}_l^{ij}\partial_j u_\varepsilon^- \bigg)\cdot\overline{u_\varepsilon^-} ds_x    \bigg\}
\end{split}
\end{equation}
Next, by using the transmission boundary condition on $\partial D_\varepsilon$ (namely, the fourth and fifth equations in (\ref{eq:Helm2})), we have
\begin{equation}\label{eq:d4}
\begin{split}
&\int_{\partial D_\varepsilon}\frac{\partial u_\varepsilon^s}{\partial n}\cdot \overline{u_\varepsilon^s}\ ds_x-\int_{\partial D_\varepsilon} \bigg(  \sum_{i,j=1}^N n_i\widetilde{\sigma}_l^{ij}\partial_j u_\varepsilon^- \bigg)\cdot\overline{u_\varepsilon^-} ds_x  \\
=& -\int_{\partial D_\varepsilon} \left(\sum_{i,j=1}^N n_i\widetilde{\sigma}_l^{ij}\partial_j u_\varepsilon^-\right)\cdot\overline{u^i}\ ds_x-\int_{\partial D_\varepsilon} \frac{\partial u^i}{\partial n}\cdot \overline{u_\varepsilon^-}\ ds_x\\
& +\int_{\partial D_\varepsilon}\frac{\partial u^i}{\partial n}\cdot \overline{u^i}\ ds_x.
\end{split}
\end{equation}
Furthermore, by using the transmission conditions on $\partial D_\varepsilon$ again and the fact that both $u_\varepsilon^s$ and $u^i$ are solutions to $(\Delta+\omega^2)u=0$, we have
\begin{equation}\label{eq:d5}
\begin{split}
& \int_{\partial D_\varepsilon} \frac{\partial u^i}{\partial n}\cdot \overline{u_\varepsilon^-}\ ds_x
-\int_{\partial D_\varepsilon}\frac{\partial u^i}{\partial n}\cdot \overline{u^i}\ ds_x
= \int_{D_\varepsilon}\frac{\partial u^i}{\partial n}\cdot \overline{u_\varepsilon^s}\ ds_x\\
=&\int_{\partial B_R}\frac{\partial u^i}{\partial n}\cdot \overline{u_\varepsilon^s}\ ds_x
-\int_{\partial B_R} u^i\overline{\frac{\partial u_\varepsilon^s}{\partial n}}\ ds_x+\int_{\partial D_\varepsilon} u^i\cdot \overline{\frac{\partial u_\varepsilon^s}{\partial n}}\ ds_x\\
=& \int_{\partial B_R}\frac{\partial u^i}{\partial n}\cdot \overline{u_\varepsilon^s}\ ds_x-\int_{\partial B_R} u^i\cdot \overline{\frac{\partial u_\varepsilon^s}{\partial n}}\ ds_x\\
+& \int_{\partial D_\varepsilon} u^i\cdot \overline{\left(\sum_{i,j=1}^Nn_i\widetilde{\sigma}_l^{ij}\partial_j u_\varepsilon^-\right)}\ ds_x-\int_{\partial D_\varepsilon} u^i\cdot \overline{\frac{\partial u^i}{\partial n}}\ ds_x .
\end{split}
\end{equation}
By combining (\ref{eq:d3})--(\ref{eq:d5}), we have
\begin{equation}\label{eq:d6}
\begin{split}
& \omega^2\int_{D_\varepsilon\backslash D_{\varepsilon/2}} \Im \widetilde{q}_l |u_\varepsilon^-|^s\ dx+\omega^2\int_{D_{\varepsilon/2}} \Im \widetilde{q}_a |u_\varepsilon^-|^2\ dx\\
=& \Im\bigg\{ -\int_{\partial B_R}\frac{\partial u_\varepsilon^s}{\partial n}\cdot \overline{u_\varepsilon^s}\ ds_x-\int_{\partial B_R} \frac{\partial u^i}{\partial n}\cdot \overline{u_\varepsilon^s}\ ds_x\\
&+\int_{\partial B_R} u^i\cdot \overline{\frac{\partial u_\varepsilon^s}{\partial n}}\ ds_x-\int_{\partial D_\varepsilon} u^i\cdot \overline{\frac{\partial u^i}{\partial n}}\ ds_x \bigg\},
\end{split}
\end{equation}
which together with the fact that
\[
\left| \int_{\partial D_\varepsilon} u^i\cdot\overline{\frac{\partial u^i}{\partial n}}\ ds_x \right|=\left| \int_{D_\varepsilon} \Delta\overline{u^i}\cdot u^i\ dx+\int_{D_\varepsilon}|\nabla u^i|^2\ dx \right|\leq C\varepsilon^N\|u^i\|_{H^1(B_R)}^2,
\]
 readily implies (\ref{eq:l2 estimate}).

The proof is complete.
\end{proof}

\begin{lem}\label{lem:main13}
Suppose that 
\[
\alpha(x)\leq \overline{\alpha}_0\quad \mbox{and}\quad \underline{\beta}_0\leq \beta(x)\leq \overline{\beta}_0,\quad x\in D\backslash D_{1/2},
\]
where $\overline{\alpha}_0$, $\underline{\beta}_0$ and $\overline{\beta}_0$ are positive constants.
Let $u_\varepsilon\in H_{loc}^1(\mathbb{R}^N)$ be the solution to (\ref{eq:Helm2}) with $f_\varepsilon=0$. Then we have
\begin{equation}\label{eq:estimate13}
\begin{split}
&\left\|\frac{\partial u_\varepsilon^+}{\partial n}(\varepsilon\ \cdot)\right\|_{H^{-3/2}(\partial D)}^2\\
 & \leq\  C_2 \varepsilon^{2r-2-N}\frac{\left(C_1+\sqrt{\overline{\alpha}_0^2+\overline{\beta}_0^2}\omega^2\varepsilon^{2-r}\right)^2}{\omega^2\underline{\beta}_0}\times\\
 &\bigg( \left\|\Lambda\left(\frac{\partial u_\varepsilon^s}{\partial n}\right)\right\|_{H^{1/2}(\partial B_R)} \left\| \frac{\partial u^i}{\partial n} \right\|_{H^{-1/2}(\partial B_R)} \\
&+ \left\|\frac{\partial u_\varepsilon^s}{\partial n}\right\|_{H^{-1/2}(\partial B_R)}\left\| u^i \right\|_{H^{1/2}(\partial B_R)}+\varepsilon^N\|u^i\|^2_{H^1(B_R)}\\
& +\left\|\frac{\partial u_\varepsilon^s}{\partial n}\right\|_{H^{-1/2}(\partial B_R)} \left\|\Lambda\left(\frac{\partial u_\varepsilon^s}{\partial n}\right)\right\|_{H^{1/2}(\partial B_R)}\bigg),
\end{split}
\end{equation}
where $C_1$ and $C_2$ are the positive constants respectively, from Lemmas~\ref{lem:main11} and \ref{lem:main12}.
\end{lem}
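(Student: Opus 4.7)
The plan is to prove Lemma~\ref{lem:main13} by chaining together Lemmas~\ref{lem:main11} and \ref{lem:main12}, glued by the transmission/jump condition on $\partial D_\varepsilon$.

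First I would invoke the transmission conditions in the fourth and fifth equations of \eqref{eq:Helm2}. In particular, on $\partial D_\varepsilon$ we have
\[
\frac{\partial u_\varepsilon^+}{\partial n} = \frac{\partial u_\varepsilon^s}{\partial n} + \frac{\partial u^i}{\partial n} = \sum_{i,j=1}^N n_i\widetilde{\sigma}_l^{ij}\partial_j u_\varepsilon^-.
\]
After rescaling $x \mapsto \varepsilon x$, this identifies
\[
\left\|\frac{\partial u_\varepsilon^+}{\partial n}(\varepsilon\ \cdot)\right\|_{H^{-3/2}(\partial D)} = \left\|\left(\sum_{i,j=1}^N n_i\widetilde{\sigma}_l^{ij}\partial_j u_\varepsilon^-\right)(\varepsilon\ \cdot)\right\|_{H^{-3/2}(\partial D)}.
\]
Since $\partial D$ is one of the two connected components of $\partial(D\setminus\overline{D}_{1/2})$, restriction gives a trivial bound of the right-hand side by the corresponding $H^{-3/2}(\partial(D\setminus\overline{D}_{1/2}))$ norm, which is exactly the quantity controlled by Lemma~\ref{lem:main11}.

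Next I would apply Lemma~\ref{lem:main11} to bound the previous display by
\[
\varepsilon^{r-1-N/2}\bigl(C_1+\sqrt{\overline{\alpha}_0^2+\overline{\beta}_0^2}\,\omega^2\varepsilon^{2-r}\bigr)\|u_\varepsilon^-\|_{L^2(D_\varepsilon\setminus D_{\varepsilon/2})}.
\]
Squaring both sides produces the prefactor $\varepsilon^{2r-2-N}\bigl(C_1+\sqrt{\overline{\alpha}_0^2+\overline{\beta}_0^2}\,\omega^2\varepsilon^{2-r}\bigr)^2$ that appears in \eqref{eq:estimate13}, multiplied by $\|u_\varepsilon^-\|_{L^2(D_\varepsilon\setminus D_{\varepsilon/2})}^2$.

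Finally, since $f_\varepsilon=0$, Lemma~\ref{lem:main12} applies and gives
\[
\omega^2\underline{\beta}_0\|u_\varepsilon^-\|_{L^2(D_\varepsilon\setminus D_{\varepsilon/2})}^2 \leq C_2\bigl(\text{the four-term sum in \eqref{eq:l2 estimate}}\bigr),
\]
so dividing by $\omega^2\underline{\beta}_0$ and substituting produces exactly the bound \eqref{eq:estimate13} with constants $C_1,C_2$ inherited from Lemmas~\ref{lem:main11}--\ref{lem:main12}. There is no real obstacle here beyond careful bookkeeping of the $\varepsilon$-powers and verifying that the absorption hypotheses on $\alpha,\beta$ needed for both lemmas coincide, which they do by assumption; the identification of $\partial u_\varepsilon^+/\partial n$ with the conormal derivative of $u_\varepsilon^-$ on $\partial D_\varepsilon$ is the only place where the PDE structure enters, and it is immediate from the transmission conditions in \eqref{eq:Helm2}.
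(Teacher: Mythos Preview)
Your proposal is correct and follows essentially the same route as the paper's own proof: the paper simply states the transmission identity $\partial u_\varepsilon^+/\partial n = \sum_{i,j} n_i\widetilde{\sigma}_l^{ij}\partial_j u_\varepsilon^-$ on $\partial D_\varepsilon$ and then says the lemma follows by combining Lemmas~\ref{lem:main11} and~\ref{lem:main12}. Your write-up spells out the bookkeeping (squaring, dividing by $\omega^2\underline{\beta}_0$, and the observation that the $H^{-3/2}(\partial D)$ norm is dominated by the $H^{-3/2}(\partial(D\setminus\overline{D}_{1/2}))$ norm) that the paper leaves implicit, but the argument is the same.
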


\begin{proof}
Using the fact that
\[
\frac{\partial u_\varepsilon^+}{\partial n}=\sum_{i,j=1}^N n_i\widetilde{\sigma}_l^{ij}\partial_j u_\varepsilon^-\qquad \mbox{on\ \ $\partial D_\varepsilon$},
\]
the lemma follows by combining Lemmas~\ref{lem:main11} and \ref{lem:main12}.
\end{proof}

The next lemma concerns the scattering estimates due to small sound-hard like inclusions.

\begin{lem}\label{lem:sh estimate}
Let $v_\tau\in H_{loc}^1(\mathbb{R}^N\backslash\overline{D}_\tau)$ be the solution to 
\begin{equation}\label{eq:aux1}
\begin{cases}
& \Delta v_\tau+\omega^2 v_\tau=0\hspace*{2.1cm} \mbox{in\ \ $\mathbb{R}^N\backslash\overline{D}_\tau$},\\
& \displaystyle{\frac{\partial v_\tau}{\partial n}=\psi\in H^{-1/2}(\partial D_\tau)\qquad \mbox{on\ \ $\partial D_\tau$}},\\
& \displaystyle{\lim_{|x|\rightarrow\infty}  |x|^{(N-1)/2}\left\{ \frac{\partial v_\tau}{\partial n}-i\omega v_\tau  \right\}=0 }.
\end{cases}
\end{equation}
Then there exists $\tau_0<R$ such that when $\tau<\tau_0$
\begin{equation}\label{eq:case1}
\|v_\tau\|_{H^{1/2}(\partial B_R)}\leq C\tau^{N-1}\|\psi(\tau\ \cdot)\|_{H^{-3/2}(\partial D)},
\end{equation}
and for the particular case if $\psi(x)=\partial u^i/\partial n$ on $\partial D_\tau$,
\begin{equation}\label{eq:case2}
\|v_\tau\|_{H^{1/2}(\partial B_R)}\leq C\tau^{N}\|\psi(\tau\ \cdot)\|_{H^{-1/2}(\partial D)},
\end{equation}
where $C$ is a constant depending only on $\tau_0$, $\omega$, $R$ and $D$ .
\end{lem}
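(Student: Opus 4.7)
The plan is to use Green's representation formula for the radiating solution $v_\tau$ in the exterior of $D_\tau$, combined with the rescaling $y=\tau z$ to the fixed geometry $\partial D$. For $x \in \mathbb{R}^N\setminus\overline{D}_\tau$, one has
\[
v_\tau(x) = -\int_{\partial D_\tau}\Phi_\omega(x,y)\psi(y)\,ds_y + \int_{\partial D_\tau} v_\tau(y)\,\partial_{n_y}\Phi_\omega(x,y)\,ds_y,
\]
where $\Phi_\omega$ is the outgoing Helmholtz fundamental solution and $n$ is the exterior unit normal to $\partial D_\tau$. Taking $\tau_0 < R/2$ ensures $\mathrm{dist}(\partial B_R, \overline{D}_\tau)\geq R/2$ uniformly in $\tau<\tau_0$, so $\Phi_\omega(x,\cdot)$ and its derivatives are smooth and uniformly bounded on neighborhoods of $\overline{D}_\tau$, with bounds independent of $x\in\partial B_R$ and of $\tau$.

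For case (a), I would change variables $y = \tau z$ in each surface integral, picking up the Jacobian $\tau^{N-1}$. The single-layer term becomes $\tau^{N-1}\int_{\partial D}\Phi_\omega(x,\tau z)\psi(\tau z)\,ds_z$, which is bounded by $H^{3/2}(\partial D)$--$H^{-3/2}(\partial D)$ duality by $C\tau^{N-1}\|\Phi_\omega(x,\tau\cdot)\|_{H^{3/2}(\partial D)}\|\psi(\tau\cdot)\|_{H^{-3/2}(\partial D)}$; the first factor is $O(1)$ by smoothness. For the double-layer term, the trace $v_\tau|_{\partial D_\tau}$ is controlled by $\psi$ via well-posedness of the exterior Neumann problem for Helmholtz on the rescaled (fixed) domain $\mathbb{R}^N\setminus\overline{D}$ at the small wavenumber $\tau\omega$; the same Jacobian and duality argument then delivers a matching $\tau^{N-1}$ bound. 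Combining the two yields (\ref{eq:case1}).

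For case (b), use that $u^i$ is Helmholtz-regular on all of $\mathbb{R}^N$ and, for $x\in\partial B_R\setminus\overline{D}_\tau$, so is $\Phi_\omega(x,\cdot)$ in $D_\tau$. Green's second identity in $D_\tau$ gives
\[
\int_{\partial D_\tau}\Phi_\omega(x,y)\partial_{n_y}u^i(y)\,ds_y = \int_{\partial D_\tau} u^i(y)\,\partial_{n_y}\Phi_\omega(x,y)\,ds_y,
\]
so the representation collapses to $v_\tau(x) = \int_{\partial D_\tau}\partial_{n_y}\Phi_\omega(x,y)[v_\tau(y)-u^i(y)]\,ds_y$. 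Recognize that $v_\tau$ coincides with minus the sound-hard scattered field for the incident wave $u^i$ and obstacle $D_\tau$, so $v_\tau-u^i = -u^{\mathrm{tot}}$ is minus the sound-hard total field. Splitting $u^{\mathrm{tot}}$ on $\partial D_\tau$ into its value at the origin plus an oscillation of size $O(\tau)$, the constant part is captured by the identity $\int_{\partial D_\tau}\partial_{n_y}\Phi_\omega(x,y)\,ds_y = -\omega^2\int_{D_\tau}\Phi_\omega(x,y)\,dy = O(\tau^N)$ (divergence theorem, valid since $x\notin D_\tau$), while the oscillation combined with the $\tau^{N-1}$ surface Jacobian again yields a factor $\tau^N$. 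A low-frequency analysis of the rescaled sound-hard scattering problem on $D$ at wavenumber $\tau\omega$ ties the pointwise size and oscillation of $u^{\mathrm{tot}}$ to $\|\psi(\tau\cdot)\|_{H^{-1/2}(\partial D)}$, giving (\ref{eq:case2}).

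The main technical obstacle is the uniform-in-$\tau$ control of $v_\tau|_{\partial D_\tau}$ (and $u^{\mathrm{tot}}|_{\partial D_\tau}$ for case (b)) in the requisite Sobolev norms, which requires well-posedness of the exterior Neumann problem as $\tau\omega\to 0$ on the fixed geometry together with careful bookkeeping of the leading-order cancellation in the small-obstacle expansion used for case (b).
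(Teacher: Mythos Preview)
The paper does not give a self-contained proof here: it simply cites Lemmas~4.1 and~4.2 of \cite{LiuSun} and remarks that the argument is ``based on layer potential techniques.'' Your sketch is therefore not competing against a detailed argument but rather supplying one, and it is in the same spirit as what the paper invokes: Green's representation for the radiating field, rescaling $y=\tau z$ to the fixed obstacle $\partial D$, and using smoothness of $\Phi_\omega(x,\cdot)$ near the origin when $x\in\partial B_R$ to pair the resulting integrals against Sobolev norms of $\psi(\tau\cdot)$. Your identification of case~(b) with the sound-hard obstacle problem and the extra cancellation via $\int_{\partial D_\tau}\partial_{n_y}\Phi_\omega\,ds_y=O(\tau^N)$ is exactly the mechanism behind the improved rate in \eqref{eq:case2}.

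The one place where your sketch is genuinely thin is the step you yourself flag at the end: controlling the double-layer contribution, i.e.\ bounding $\|v_\tau(\tau\cdot)\|$ on $\partial D$ by $\|\psi(\tau\cdot)\|_{H^{-3/2}(\partial D)}$ uniformly in~$\tau$. Two points deserve care. First, standard well-posedness of the rescaled exterior Neumann problem gives an $H^{-1/2}\!\to\!H^{1/2}$ bound, not the $H^{-3/2}$ control you need; to push to the weaker norm one really does have to go through boundary integral operators (single-layer ansatz, mapping properties of $S_{\tau\omega}$ and $-\tfrac12 I+K^*_{\tau\omega}$ on the full Sobolev scale), which is precisely what the cited arguments in \cite{LiuSun} and \cite{Ammari4} do. Second, in dimension $N=2$ the low-frequency limit of the exterior Neumann problem is \emph{not} a regular perturbation of Laplace---the Neumann-to-Dirichlet map blows up like $\log(\tau\omega)$ on the mean of the data---so ``uniform well-posedness as $\tau\omega\to0$'' is false as stated. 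The saving grace is that the rescaled Neumann data carries an extra factor of $\tau$, which beats the logarithm, but making this precise requires the low-frequency expansions of the layer potentials rather than a black-box well-posedness statement. In short: your outline is correct and matches the paper's intended route, but the step you call an ``obstacle'' is the entire content of the lemma, and resolving it needs the boundary-integral machinery of the references, not just well-posedness.
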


\begin{proof}
The proof of (\ref{eq:case1}) can be modified directly from the proof of Lemma~4.2 in \cite{LiuSun}, and the proof of (\ref{eq:case2}) can be modified directly from that of Lemma~4.1 in \cite{LiuSun}, both based on layer potential techniques. We also refer to the excellent monograph \cite{Ammari4} for related results. 
\end{proof}

We are in a position to present the proof of Theorem~\ref{thm:main1}.

\begin{proof}[Proof of Theorem~\ref{thm:main1}]
Let $v_1\in H_{loc}^{1}(\mathbb{R}^N\backslash \overline{D}_\varepsilon)$ be the scattering solution to 
\begin{equation}\label{eq:aux2}
\begin{cases}
& \Delta v_1+\omega^2 v_1=0\hspace*{2.65cm} \mbox{in\ \ $\mathbb{R}^N\backslash\overline{D}_\varepsilon$},\\
& \displaystyle{\frac{\partial v_1}{\partial n}=\frac{\partial u_\varepsilon^+}{\partial n}\in H^{-1/2}(\partial D_\varepsilon)\qquad \mbox{on\ \ $\partial D_\varepsilon$}},\\
& \displaystyle{\lim_{|x|\rightarrow\infty}  |x|^{(N-1)/2}\left\{ \frac{\partial v_1}{\partial n}-i\omega v_1  \right\}=0 },
\end{cases}
\end{equation}
and let $v_2\in H_{loc}^{1}(\mathbb{R}^N\backslash \overline{D}_\varepsilon)$ be the scattering solution to 
\begin{equation}\label{eq:aux3}
\begin{cases}
& \Delta v_2+\omega^2 v_2=0\hspace*{3.05cm} \mbox{in\ \ $\mathbb{R}^N\backslash\overline{D}_\varepsilon$},\\
& \displaystyle{\frac{\partial v_2}{\partial n}=\frac{\partial u^i}{\partial n}\in H^{-1/2}(\partial D_\varepsilon)\qquad\quad\ \mbox{on\ \ $\partial D_\varepsilon$}},\\
& \displaystyle{\lim_{|x|\rightarrow\infty}  |x|^{(N-1)/2}\left\{ \frac{\partial v_2}{\partial n}-i\omega v_2  \right\}=0 }.
\end{cases}
\end{equation}
Clearly, we have
\[
u_\varepsilon^s=v_1-v_2\qquad \mbox{in\ \ $\mathbb{R}^N\backslash\overline{D}_\varepsilon$}.
\]
By taking $\tau=\varepsilon$ in Lemma~\ref{lem:sh estimate}, we have
\begin{equation}\label{eq:dd1}
\begin{split}
& \|u_\varepsilon^s\|_{H^{1/2}(\partial B_R)} \\
\leq & C_1\bigg( \varepsilon^{N-1}\left\|\left(\frac{\partial u_\varepsilon^+}{\partial n}\right)(\varepsilon\ \cdot)\right \|_{H^{-3/2}(\partial D)}
+\varepsilon^N\left\|\left(\frac{\partial u^i}{\partial n} \right)(\varepsilon\ \cdot)\right\|_{H^{-1/2}(\partial D)}   \bigg )\\
\leq & C_2 \varepsilon^{N-1}\left\|\left(\frac{\partial u_\varepsilon^+}{\partial n}\right)(\varepsilon\ \cdot)\right \|_{H^{-3/2}(\partial D)}+C_2\varepsilon^N\| u^i \|_{H^1(B_R)}.
\end{split}
\end{equation}

Next, we first consider the case with $r\geq 2$. By (\ref{eq:estimate13}), we know that there exists $C_3$ such that
\begin{equation}\label{eq:dd2}
\begin{split}
&\left\| \frac{\partial u_\varepsilon^+}{\partial n}(\varepsilon\ \cdot) \right\|_{H^{-3/2}(\partial D)}\\
\leq & C_3\varepsilon^{1-N/2}\bigg(  \left\|\Lambda\left(\frac{\partial u_\varepsilon^s}{\partial n}\right)\right\|^{1/2}_{H^{1/2}(\partial B_R)}\left\| \frac{\partial u^i}{\partial n} \right\|_{H^{-1/2}(\partial B_R)}^{1/2}   \\ 
+ &\left\|\frac{\partial u_\varepsilon^s}{\partial n}\right\|^{1/2}_{H^{-1/2}(\partial B_R)}\left\| u^i \right\|_{H^{1/2}(\partial B_R)}^{1/2}+\varepsilon^{N/2}\|u^i\|_{H^1(B_R)}\\
+ & \left\|\frac{\partial u_\varepsilon^s}{\partial n}\right\|^{1/2}_{H^{-1/2}(\partial B_R)} \left\|\Lambda\left(\frac{\partial u_\varepsilon^s}{\partial n}\right)\right\|_{H^{1/2}(\partial B_R)}^{1/2}\bigg).
\end{split}
\end{equation}
By (\ref{eq:dd1}) and (\ref{eq:dd2}), we have 
\begin{equation}\label{eq:dd3}
\begin{split}
&\left\|\Lambda\left(\frac{\partial u_\varepsilon^s}{\partial n}\right)\right\|_{H^{1/2}(\partial B_R)}\\
\leq & 4C_2^2C_3^2\varepsilon^N\left\| \frac{\partial u^i}{\partial n} \right\|_{H^{-1/2}(\partial B_R)}+\frac 1 4 \left\|\Lambda\left(\frac{\partial u_\varepsilon^s}{\partial n}\right)\right\|_{H^{1/2}(\partial B_R)}\\
+& C_2C_3\varepsilon^{N/2}\left\| \frac{\partial u_\varepsilon^s}{\partial n} \right\|_{H^{-1/2}(\partial B_R)}^{1/2}\|u^i\|_{H^{1/2}(\partial B_R)}^{1/2}\\
+&4C_2^2C_3^2\varepsilon^N\left\| \frac{\partial u_\varepsilon^s}{\partial n} \right\|_{H^{-1/2}(\partial B_R)}
+\frac 1 4 \left\|\Lambda\left(\frac{u_\varepsilon^s}{\partial n}\right)\right\|_{H^{1/2}(\partial B_R)}\\
+& (C_2C_3+C_2)\varepsilon^N\|u^i\|_{H^1(B_R)},
\end{split}
\end{equation}
and from which we further have that there exists $C_4$ such that 
\begin{equation}\label{eq:dd4}
\begin{split}
& \left\|\Lambda\left(\frac{\partial u_\varepsilon^s}{\partial n}\right)\right\|_{H^{1/2}(\partial B_R)}\\
\leq & C_4\varepsilon^{N/2}\left\|\frac{\partial u_\varepsilon^s}{\partial n}\right\|_{H^{-1/2}(\partial B_R)}^{1/2}\|u^i\|_{H^{1/2}(\partial B_R)}^{1/2}+C_4\varepsilon^N\left\| \frac{\partial u_\varepsilon^s}{\partial n} \right\|_{H^{-1/2}(\partial B_R)}\\
&+C_4\varepsilon^N\| u^i \|_{H^1(B_R)}\\
\leq & \frac{C_4^2}{\epsilon}\varepsilon^N\|u^i\|_{H^{1/2}(\partial B_R)}+\epsilon\left\|\frac{\partial u_\varepsilon^s}{\partial n}\right\|_{H^{-1/2}(\partial B_R)}\\
&+C_4\varepsilon^N\left\| \frac{\partial u_\varepsilon^s}{\partial n} \right\|_{H^{-1/2}(\partial B_R)}+C_4\varepsilon^N\| u^i \|_{H^1(B_R)},
\end{split}
\end{equation}
where $\epsilon\in\mathbb{R}_+$.
By choosing $\varepsilon_0$ sufficiently small such that when $\varepsilon<\varepsilon_0$,
\[
C_4\varepsilon^N\leq \frac 1 4 \|\Lambda^{-1}\|^{-1}_{\mathcal{L}(H^{1/2}(\partial B_R), H^{-1/2}(\partial B_R))}
\]
and also by choosing $\epsilon$ in (\ref{eq:dd4}) such that
\[
\epsilon\leq \frac 1 4 \|\Lambda^{-1}\|^{-1}_{\mathcal{L}(H^{1/2}(\partial B_R), H^{-1/2}(\partial B_R))},
\]
one can show from (\ref{eq:dd4}) by straightforward calculations that there exists $C$ such that
\begin{equation}\label{eq:dd5}
\left\|\frac{\partial u_\varepsilon^s}{\partial n}\right\|_{H^{-1/2}(\partial B_R)}\leq C\varepsilon^N\|u^i\|_{H^1(B_R)}.
\end{equation}
Since $A_\varepsilon(\hat{x})$ could be read from the large $|x|$ asymptotics of $u_\varepsilon^s$, by the well-posedness of the forward scattering problem for $u_\varepsilon^s$ in the exterior of $B_R$, one readily has
\[
|A_\varepsilon(\hat{x})|\leq C\varepsilon^N\|u^i\|_{H^1(B_R)},\qquad \forall \hat{x}\in\mathbb{S}^{N-1}.
\]

Next, we consider the case with $2-N/2<r<2$. In this case, by (\ref{eq:estimate13}), one has that there exists $C_5$ such that
\begin{equation}
\begin{split}
&\left\| \frac{\partial u_\varepsilon^+}{\partial n}(\varepsilon\ \cdot) \right\|_{H^{-3/2}(\partial D)}\\
\leq& C_5\varepsilon^{r-1-N/2}\bigg(  \left\|\Lambda\left(\frac{\partial u_\varepsilon^s}{\partial n}\right)\right\|^{1/2}_{H^{1/2}(\partial B_R)}\left\| \frac{\partial u^i}{\partial n} \right\|_{H^{-1/2}(\partial B_R)}^{1/2}   \\ 
&+\left\|\frac{\partial u_\varepsilon^s}{\partial n}\right\|^{1/2}_{H^{-1/2}(\partial B_R)}\left\| u^i \right\|_{H^{1/2}(\partial B_R)}^{1/2}+\varepsilon^{N/2}\|u^i\|_{H^1(B_R)}\\
&+ \left\|\frac{\partial u_\varepsilon^s}{\partial n}\right\|^{1/2}_{H^{-1/2}(\partial B_R)} \left\|\Lambda\left(\frac{\partial u_\varepsilon^s}{\partial n}\right)\right\|_{H^{1/2}(\partial B_R)}^{1/2}\bigg).
\end{split}
\end{equation}
which in combination with (\ref{eq:dd1}) yields
\begin{equation}\label{eq:ddd1}
\begin{split}
&\left\| \Lambda\left(\frac{\partial u_\varepsilon^s}{\partial n}\right) \right\|_{H^{1/2}(\partial B_R)}\\
\leq & {C}_6\varepsilon^{r-2+N/2}\bigg(  \left\|\Lambda\left(\frac{\partial u_\varepsilon^s}{\partial n}\right)\right\|^{1/2}_{H^{1/2}(\partial B_R)}\left\| \frac{\partial u^i}{\partial n} \right\|_{H^{-1/2}(\partial B_R)}^{1/2}   \\ 
+ &\left\|\frac{\partial u_\varepsilon^s}{\partial n}\right\|^{1/2}_{H^{-1/2}(\partial B_R)}\left\| u^i \right\|_{H^{1/2}(\partial B_R)}^{1/2}+\varepsilon^{N/2}\|u^i\|_{H^1(B_R)}\\
+ & \left\|\frac{\partial u_\varepsilon^s}{\partial n}\right\|^{1/2}_{H^{-1/2}(\partial B_R)} \left\|\Lambda\left(\frac{\partial u_\varepsilon^s}{\partial n}\right)\right\|_{H^{1/2}(\partial B_R)}^{1/2}\bigg)\\
&+C_6\varepsilon^N\|u^i\|_{H^1(B_R)}.
\end{split}
\end{equation}
By a similar algebraic argument as earlier, one can show from (\ref{eq:ddd1}) that
\[
\left\|\frac{\partial u_\varepsilon^s}{\partial n}\right\|_{H^{-1/2}(\partial B_R)}\leq C\varepsilon^{N+2r-4}\|u^i\|_{H^1(B_R)},
\]
which in turn implies
\[
|A_\varepsilon(\hat{x})|\leq C\varepsilon^{N+2r-4}\|u^i\|_{H^1(B_R)}.
\]

The proof is completed.
\end{proof}

Next, we consider the cloaking of an active/radiating source term by assuming that $f\in L^2(D_{1/2})$ in (\ref{eq:Helm1}). We shall show that

\begin{thm}\label{thm:main2}
Suppose $f\in L^2(D_{1/2})$ and $\sigma_a, \Re q_a$ are arbitrary but regular, and
\begin{equation}\label{eq:lossy cloaked}
\Im q_a\geq q_0>0\quad \mbox{on\ \ $supp(f)\subset D_{1/2}$}.
\end{equation}
Let $A_\varepsilon(\hat{x})$ be the scattering amplitude to (\ref{eq:Helm2}). Let $B_R$, $R\in\mathbb{R}_+$, be a central ball of radius $R$ such that $\Omega\subset B_R$. Then there exists $\varepsilon_0\in\mathbb{R}_+$ such that when $\varepsilon<\varepsilon_0$
\begin{equation}\label{eq:estimate main 2}
\begin{split}
|A_\varepsilon(\hat{x})|\leq & C\bigg( \varepsilon^{\min\{N+2r-4, N\}} \|u^i\|_{H^1(B_R)} \\
 & + \varepsilon^{\min\{N/2, N/2+r-2 \}}\|f\|_{L^2(D_{1/2})}     \bigg ),\quad \forall\hat{x}\in\mathbb{S}^{N-1}
\end{split}
\end{equation}
where $C$ is positive constant independent of $\varepsilon$, $\sigma_a$, $\Re q_a$, $r$ and $f$.
\end{thm}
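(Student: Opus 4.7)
The plan is to parallel the proof of Theorem~\ref{thm:main1}, replacing Lemma~\ref{lem:main12} with a source-sensitive analogue that exploits the hypothesis $\Im q_a\geq q_0>0$ on $\mathrm{supp}(f)$. First, in the derivation of (\ref{eq:d3}) one keeps $f_\varepsilon$ on the right-hand side of the third equation of (\ref{eq:Helm2}); multiplying by $\overline{u_\varepsilon^-}$ and integrating by parts produces an extra term $-\int_{D_{\varepsilon/2}}f_\varepsilon\overline{u_\varepsilon^-}\,dx$ in (\ref{eq:d2}) and, after taking imaginary parts, in (\ref{eq:d3}). Since $\Im\widetilde{q}_a(x)=\varepsilon^{-N}\Im q_a(x/\varepsilon)\geq q_0\varepsilon^{-N}$ on $\mathrm{supp}(f_\varepsilon)\subset D_{\varepsilon/2}$, while the scaling $f_\varepsilon=\varepsilon^{-N}F_\varepsilon^{\ast}f$ gives $\|f_\varepsilon\|_{L^2(D_{\varepsilon/2})}^2=\varepsilon^{-N}\|f\|_{L^2(D_{1/2})}^2$, a Cauchy--Schwarz plus Young's inequality argument (with weight of order $\omega^2q_0\varepsilon^{-N}$) absorbs $\|u_\varepsilon^-\|_{L^2(\mathrm{supp}(f_\varepsilon))}^2$ into the damping on $D_{\varepsilon/2}$ at the cost of an $\varepsilon$-independent extra contribution of order $\|f\|_{L^2(D_{1/2})}^2$. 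The net effect is that the right-hand side of (\ref{eq:l2 estimate}) acquires an additional summand of the form $C\|f\|_{L^2(D_{1/2})}^2$, uniformly in $\sigma_a$ and $\Re q_a$.

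Next, Lemma~\ref{lem:main11} is applied as stated (the authors have already noted it holds for general $f$) and combined with the modified $L^2$ estimate in precisely the manner of Lemma~\ref{lem:main13}; the right-hand side of (\ref{eq:estimate13}) then picks up an additive term of order $\varepsilon^{2r-2-N}(C_1+\sqrt{\overline{\alpha}_0^2+\overline{\beta}_0^2}\,\omega^2\varepsilon^{2-r})^2\|f\|_{L^2(D_{1/2})}^2$, where the parenthetical factor is bounded by a constant when $r\leq 2$ and by $C\varepsilon^{2(2-r)}$ when $r\geq 2$.

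From here I would repeat the $v_1,v_2$ decomposition in (\ref{eq:aux2})--(\ref{eq:aux3}) verbatim, since the transmission conditions on $\partial D_\varepsilon$ are structurally identical; Lemma~\ref{lem:sh estimate} applies unchanged. Propagating the new $\|f\|_{L^2(D_{1/2})}$ contribution through the square-root of the modified (\ref{eq:estimate13}) and through the prefactor $\varepsilon^{N-1}$ in (\ref{eq:dd1}) yields a factor $\varepsilon^{N-1}\cdot\varepsilon^{r-1-N/2}(C+C\varepsilon^{2-r})$, which simplifies to $\varepsilon^{N/2+r-2}$ for $2-N/2<r<2$ and to $\varepsilon^{N/2}$ for $r\geq 2$; this is precisely the exponent $\min\{N/2,N/2+r-2\}$ in (\ref{eq:estimate main 2}). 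The $u^i$-dependent exponent $\min\{N+2r-4,N\}$ is inherited verbatim from the two-case analysis in the proof of Theorem~\ref{thm:main1}.

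The algebraic fixed-point absorption (compare (\ref{eq:dd3})--(\ref{eq:dd5}) for $r\geq 2$ and (\ref{eq:ddd1}) for $2-N/2<r<2$) then proceeds as in Theorem~\ref{thm:main1}, with the $\|f\|_{L^2(D_{1/2})}$ term carried through additively; it does not interact with the terms that are absorbed on the left to eliminate $\|\Lambda(\partial u_\varepsilon^s/\partial n)\|_{H^{1/2}(\partial B_R)}$. Solving for $\|\partial u_\varepsilon^s/\partial n\|_{H^{-1/2}(\partial B_R)}$ and invoking well-posedness of the exterior forward problem to pass to $|A_\varepsilon(\hat{x})|$ completes the proof. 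The main obstacle will be the bookkeeping in the second paragraph: one must track carefully how the $\varepsilon^{2-r}$ correction in Lemma~\ref{lem:main11} combines with the $\varepsilon^{r-1-N/2}$ prefactor in two competing regimes so that the advertised $\min$ structure emerges in both the $\|u^i\|_{H^1(B_R)}$ and the $\|f\|_{L^2(D_{1/2})}$ contributions on the right of (\ref{eq:estimate main 2}).
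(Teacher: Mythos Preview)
Your proposal is correct and follows essentially the same approach as the paper: the paper likewise first derives the source-sensitive analogue of Lemma~\ref{lem:main12} (this is Lemma~\ref{lem:main22}) by carrying the $\int_{D_{\varepsilon/2}}f_\varepsilon\,\overline{u_\varepsilon^-}\,dx$ term through (\ref{eq:d6}), using $\Im\widetilde q_a\geq q_0\varepsilon^{-N}$ on $\mathrm{supp}(f_\varepsilon)$ together with Young's inequality (with $\epsilon=\omega^2 q_0/2$) to absorb the $\|u_\varepsilon^-\|_{L^2(D_{\varepsilon/2})}^2$ contribution, and then combines this with Lemmas~\ref{lem:main11} and~\ref{lem:sh estimate} exactly as you outline to obtain (\ref{eq:2ddd1}) and finish via the algebraic absorption from Theorem~\ref{thm:main1}. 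Your exponent bookkeeping in the two regimes $r\geq 2$ and $2-N/2<r<2$ matches the paper's $\min\{N/2,N/2+r-2\}$ outcome.
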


\begin{rem}\label{rem:active}
We first note that if one takes $f=0$ in Theorem~\ref{thm:main2}, then Theorem~\ref{thm:main1} is recovered. Next, we shall emphasize the critical role of the lossy layer. We only consider the two-dimensional case as an example. If one excludes the lossy layer by taking $\widetilde\sigma_l=\widetilde\sigma_a$ and $\widetilde{q}_l=\widetilde{q}_a$ to be arbitrary (but regular) as part of the passive content being cloaked. Let $\widetilde\sigma_a=I$, $\widetilde q_a=1$ in $D_{\varepsilon}$, or equivalently, $\sigma_a=I$, $q_a=\varepsilon^2$ in $D$. Moreover, we let $f=c_0\chi_{D_{1/2}}$ be the source term that one intends to cloak, where $c_0$ is a generic positive constant. It is readily seen that $(\sigma_a, q_a)$ is not the `resonant' inclusion discussed for passive cloaking in Remark~\ref{rem:passive}. However, it can be straightforwardly shown that generically one would have a significant $A_\varepsilon(x)$ for the virtual scattering problem \eqref{eq:Helm2} with the parameters specified in the above, i.e., one cannot cloak the active source term $f$. For our near-cloaking construction, it can be seen from Theorem~\ref{thm:main2} that if one maintains the place where the source term is located to be absorbing, namely condition \eqref{eq:lossy cloaked} is satisfied, then a much practical and favorable near-cloak could be achieved. It is emphasized that in Theorem~\ref{thm:main2} we only proposed one possible way of effectively cloaking a region with active/radiating contents, but we do not claim that it is the most efficient way. We would also like to note that in \cite{LasZho}, the cloaking of a source term is also considered, but the study there is of different interests. 
\end{rem}

In order to prove Theorem~\ref{thm:main2}, we shall first derive a lemma similar to Lemma~\ref{lem:main12}.

\begin{lem}\label{lem:main22}
Suppose that 
\[
\alpha(x)\leq \overline{\alpha}_0\quad \mbox{and}\quad \underline{\beta}_0\leq \beta(x)\leq \overline{\beta}_0,\quad x\in D\backslash D_{1/2},
\]
where $\overline{\alpha}_0$, $\underline{\beta}_0$ and $\overline{\beta}_0$ are positive constants, and
\begin{equation}\label{eq:ll}
\Im q_a\geq q_0>0\quad \mbox{on\ \ $supp(f)\subset D_{1/2}$}.
\end{equation}
Let $u_\varepsilon\in H_{loc}^1(\mathbb{R}^N)$ be the solution to (\ref{eq:Helm2}). Then we have
\begin{equation}\label{eq:2l2 estimate}
\begin{split}
&\omega^2\underline{\beta}_0\int_{D_\varepsilon\backslash D_{\varepsilon/2}} |u_\varepsilon^-|^2\ d x\\
 \leq &\ C\bigg( \left\|\Lambda\left(\frac{\partial u_\varepsilon^s}{\partial n}\right)\right\|_{H^{1/2}(\partial B_R)} \left\| \frac{\partial u^i}{\partial n} \right\|_{H^{-1/2}(\partial B_R)} \\
&+ \left\|\frac{\partial u_\varepsilon^s}{\partial n}\right\|_{H^{-1/2}(\partial B_R)}\left\| u^i \right\|_{H^{1/2}(\partial B_R)}+\varepsilon^N\|u^i\|_{H^1(B_R)}^2\\
& +\left\|\frac{\partial u_\varepsilon^s}{\partial n}\right\|_{H^{-1/2}(\partial B_R)} \left\|\Lambda\left(\frac{\partial u_\varepsilon^s}{\partial n}\right)\right\|_{H^{1/2}(\partial B_R)}+\|f\|_{L^2(D_{1/2})}^2\bigg),
\end{split}
\end{equation}
where $C$ is a positive constant depending only on $R$, $\omega$ and $q_0$. 
\end{lem}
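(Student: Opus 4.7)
The plan is to mirror the proof of Lemma~\ref{lem:main12} line by line, with the sole new ingredient being the treatment of the source term $f_\varepsilon$ appearing on the right-hand side of the energy identity, which will be absorbed using the absorbing hypothesis \eqref{eq:ll} on $\mathrm{supp}(f)$. First I would multiply the two interior equations in \eqref{eq:Helm2} by $\overline{u_\varepsilon^-}$ and integrate by parts over $D_\varepsilon\backslash\overline D_{\varepsilon/2}$ and $D_{\varepsilon/2}$ respectively. The only modification of \eqref{eq:d2} is the appearance of an extra term $\int_{D_{\varepsilon/2}} f_\varepsilon\,\overline{u_\varepsilon^-}\,dx$ on the right-hand side, since $f_\varepsilon$ is supported in $D_{\varepsilon/2}$; the exterior identity \eqref{eq:d1} for $u_\varepsilon^s$ on $B_R\backslash\overline D_\varepsilon$ is unaffected.

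Next I would add the two identities, take imaginary parts, and carry out exactly the same manipulation of the $\partial D_\varepsilon$ integrals via the transmission conditions in \eqref{eq:Helm2} and the Green-type identity between $B_R$ and $D_\varepsilon$ for $u^i$ as in \eqref{eq:d4}--\eqref{eq:d5}. This produces the analogue of \eqref{eq:d6} together with exactly one extra summand, namely $\Im\int_{D_{\varepsilon/2}} f_\varepsilon\,\overline{u_\varepsilon^-}\,dx$, on the right-hand side. The $\partial B_R$ terms and the $\varepsilon^N\|u^i\|_{H^1(B_R)}^2$ boundary-layer contribution are then estimated exactly as in Lemma~\ref{lem:main12}, reproducing the first four summands of \eqref{eq:2l2 estimate}.

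The main new step is to absorb the extra $f_\varepsilon$ contribution. From $\widetilde q_a(x)=\varepsilon^{-N}q_a(x/\varepsilon)$ one has $\Im\widetilde q_a(x)\geq \varepsilon^{-N}q_0$ on $\mathrm{supp}(f_\varepsilon)$, so the imaginary-part identity already carries on its left-hand side the positive term $\omega^2\varepsilon^{-N}q_0\|u_\varepsilon^-\|_{L^2(\mathrm{supp}(f_\varepsilon))}^2$, in addition to the $\omega^2\underline\beta_0\|u_\varepsilon^-\|_{L^2(D_\varepsilon\backslash D_{\varepsilon/2})}^2$ term we wish to retain. Cauchy--Schwarz followed by Young's inequality with weight $\delta=\omega^2\varepsilon^{-N}q_0$ gives
\begin{equation*}
\Bigl|\Im\int_{D_{\varepsilon/2}} f_\varepsilon\,\overline{u_\varepsilon^-}\,dx\Bigr|\leq \frac{\varepsilon^N}{2\omega^2 q_0}\|f_\varepsilon\|_{L^2(D_{\varepsilon/2})}^2+\frac{\omega^2\varepsilon^{-N}q_0}{2}\|u_\varepsilon^-\|_{L^2(\mathrm{supp}(f_\varepsilon))}^2,
\end{equation*}
whose second summand is absorbed into the left-hand side. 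The change of variables $y=x/\varepsilon$ gives $\|f_\varepsilon\|_{L^2(D_{\varepsilon/2})}^2=\varepsilon^{-N}\|f\|_{L^2(D_{1/2})}^2$, so the remaining contribution is exactly of the form $C\|f\|_{L^2(D_{1/2})}^2$ advertised in \eqref{eq:2l2 estimate}.

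The only delicate point is the interplay of the two $\varepsilon$-scales: $\Im\widetilde q_a$ is large of order $\varepsilon^{-N}$ on $\mathrm{supp}(f_\varepsilon)$ and $\|f_\varepsilon\|^2$ is large of the same order, so the two powers cancel and produce an $\varepsilon$-independent $\|f\|_{L^2(D_{1/2})}^2$ contribution. Once this cancellation is observed, every remaining step is identical to the proof of Lemma~\ref{lem:main12}.
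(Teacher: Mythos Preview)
Your proposal is correct and follows essentially the same route as the paper: derive the analogue of \eqref{eq:d6} with the single extra term $\Im\int_{D_{\varepsilon/2}} f_\varepsilon\,\overline{u_\varepsilon^-}\,dx$, then absorb it via Young's inequality against the $\omega^2 q_0\varepsilon^{-N}\int_{D_{\varepsilon/2}}|u_\varepsilon^-|^2$ contribution coming from \eqref{eq:ll}, using the scaling $\|f_\varepsilon\|_{L^2(D_{\varepsilon/2})}^2=\varepsilon^{-N}\|f\|_{L^2(D_{1/2})}^2$ so that the $\varepsilon$-powers cancel. The paper simplifies by assuming w.l.o.g.\ $\mathrm{supp}(f)=D_{1/2}$, but otherwise your argument and the paper's coincide.
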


\begin{proof}
W.L.O.G, we assume that 
\[
supp(f)=D_{1/2}.
\]
By a similar argument to that for the proof of Lemma~\ref{lem:main12}, one can show by straightforward calculations that 
\begin{equation}\label{eq:2d6}
\begin{split}
& \omega^2\int_{D_\varepsilon\backslash D_{\varepsilon/2}} \Im \widetilde{q}_l |u_\varepsilon^-|^2\ dx+\omega^2\int_{D_{\varepsilon/2}} \Im \widetilde{q}_a |u_\varepsilon^-|^2\ dx\\
=& \Im\bigg\{ -\int_{\partial B_R}\frac{\partial u_\varepsilon^s}{\partial n}\cdot \overline{u_\varepsilon^s}\ ds_x-\int_{\partial B_R} \frac{\partial u^i}{\partial n}\cdot \overline{u_\varepsilon^s}\ ds_x\\
&+\int_{\partial B_R} u^i\cdot \overline{\frac{\partial u_\varepsilon^s}{\partial n}}\ ds_x-\int_{\partial D_\varepsilon} u^i\cdot \overline{\frac{\partial u^i}{\partial n}}\ ds_x +\int_{D_{\varepsilon/2}}f_\varepsilon u_\varepsilon^-\ dx \bigg\}.
\end{split}
\end{equation}
Using \eqref{eq:ll} in \eqref{eq:2d6}, one further has by direct verifications that 
\begin{equation}\label{eq:2d7}
\begin{split}
&\omega^2\underline{\beta}_0\int_{D_\varepsilon\backslash D_{\varepsilon/2}} |u_\varepsilon^-|^2\ d x+\omega^2 q_0\varepsilon^{-N}\int_{D_{\varepsilon/2}} |u_\varepsilon^-|^2\ dx \\
\leq & \ C\bigg(  \left\|\Lambda\left(\frac{\partial u_\varepsilon^s}{\partial n}\right)\right\|_{H^{1/2}(\partial B_R)}\left\| \frac{\partial u^i}{\partial n} \right\|_{H^{-1/2}(\partial B_R)}   \\ 
+ &\left\|\frac{\partial u_\varepsilon^s}{\partial n}\right\|_{H^{-1/2}(\partial B_R)}\left\| u^i \right\|_{H^{1/2}(\partial B_R)}+\varepsilon^{N}\|u^i\|_{H^1(B_R)}^2\\
+ & \left\|\frac{\partial u_\varepsilon^s}{\partial n}\right\|_{H^{-1/2}(\partial B_R)} \left\|\Lambda\left(\frac{\partial u_\varepsilon^s}{\partial n}\right)\right\|_{H^{1/2}(\partial B_R)}\\
+& \frac{\varepsilon^{-N}}{\epsilon}\|f\left(\frac 1\varepsilon\ \cdot \right)\|_{L^2(D_{\varepsilon/2})}^2+\epsilon\varepsilon^{-N}\|u_\varepsilon^-\|_{L^2(D_{\varepsilon/2})}^2\bigg)
\end{split}
\end{equation}
By choosing $\epsilon=\omega^2q_0/2$, together with the use of the fact that
\[
\|f(\frac{1}{\varepsilon}\ \cdot)\|_{L^2(D_{\varepsilon/2})}=\varepsilon^{N/2}\|f\|_{L^2(D_{1/2})},
\]
one has (\ref{eq:2l2 estimate}) by straightforward verifications.

The proof is completed.
\end{proof}

\begin{proof}[Proof of Theorem~\ref{thm:main2}]
By Lemmas~\ref{lem:main11}, \ref{lem:sh estimate} and \ref{lem:main22} and a similar argument to that for the proof of Theorem~\ref{thm:main1}, one can show that
\begin{equation}\label{eq:2ddd1}
\begin{split}
&\left\| \Lambda\left(\frac{\partial u_\varepsilon^s}{\partial n}\right) \right\|_{H^{1/2}(\partial B_R)}\\
\leq\ & {C} \varepsilon^{\min\{r-2+N/2, N/2\}}
\bigg(  \left\|\Lambda\left(\frac{\partial u_\varepsilon^s}{\partial n}\right)\right\|^{1/2}_{H^{1/2}(\partial B_R)}\left\| \frac{\partial u^i}{\partial n} \right\|_{H^{-1/2}(\partial B_R)}^{1/2}   \\ 
&+\left\|\frac{\partial u_\varepsilon^s}{\partial n}\right\|^{1/2}_{H^{-1/2}(\partial B_R)}\left\| u^i \right\|_{H^{1/2}(\partial B_R)}^{1/2}+\varepsilon^{N/2}\|u^i\|_{H^1(B_R)}\\
&+ \left\|\frac{\partial u_\varepsilon^s}{\partial n}\right\|^{1/2}_{H^{-1/2}(\partial B_R)} \left\|\Lambda\left(\frac{\partial u_\varepsilon^s}{\partial n}\right)\right\|_{H^{1/2}(\partial B_R)}^{1/2}+\|f\|_{L^2(D_{1/2})}\bigg).
\end{split}
\end{equation}
Using a similar algebraic argument to that for the proof of Theorem~\ref{thm:main1}, one can show by direct calculations that
\[
\begin{split}
&\left\|\frac{\partial u_\varepsilon^s}{\partial n}\right\|_{H^{-1/2}(\partial B_R)}\\
\leq& \  C\bigg( \varepsilon^{\min\{N+2r-4, N\}} \|u^i\|_{H^1(B_R)} 
 + \varepsilon^{\min\{N/2, N/2+r-2 \}}\|f\|_{L^2(D_{1/2})}     \bigg )
 \end{split}
\]
which immediately implies (\ref{eq:estimate main 2}) and completes the proof.
\end{proof}

\section{Conclusion and discussion}

In this work, we have been mainly concerned with the near-invisibility cloaking by the `blow-up-a-small-region' construction through the transformation optics. From a practical viewpoint, we mainly considered the case that the cloaked content is arbitrary, and moreover it could be both a passive medium or an active/radiating source. However, there are cloaking-busting inclusions which defy the attempt to achieve near-cloaks for both passive cloaking and active cloaking. In order to defeat the cloak-busts, a lossy layer is incorporated into our construction. Such a damping mechanism was originated in \cite{KOVW} by using a special layer with a high loss parameter, and was later adopted in \cite{LiuSun} by using a layer with a high density parameter. In the present paper, the lossy layer in our scheme is very general which could be variable, and even anisotropic, and the density parameter ranges from very high to reasonably low. This provides more flexibility in the construction of practical cloaking devices. We assessed the cloaking performance for our construction in terms of the scattering amplitude due to a time-harmonic wave.  We derive very accurate estimates of the scattering amplitude in terms of the regularization parameter and the material parameters of the lossy layer disregarding the cloaked contents. It is worth noting that in Theorem~\ref{thm:main2}, we provide an effective way in cloaking active contents by maintaining the place where the active object is located to be absorbing. 

\section*{Acknowledgement}

The work of Hongyu Liu is supported by NSF grant, DMS 1207784. The helpful comments from the anonymous referee are gratefully acknowledged.

\end{document}